\newcommand{\N}{\mathbb{N}}
\newcommand{\R}{\mathbb{R}}
\newcommand{\Z}{\mathbb{Z}}
\newcommand{\T}{\mathcal{T}}
\newcommand{\den}{\mathcal{D}}
\newcommand{\myrare}{\smash[b]{\underline{{\mathcal{R}}}}_r(\xi)}
\newcommand{\myden}{\smash[b]{\underline{{\mathcal{D}}}}_r(\xi)}
\newcommand{\essinf}{\mathrm{ess\hspace{1mm}inf}}
\newcommand{\esssup}{\mathrm{ess\hspace{1mm}sup}}
\newcommand{\set}[1]{\left\{#1\right\}}
\newcommand{\e}{\varepsilon}
\renewcommand{\S}{\mathcal{S}}
\renewcommand{\P}{\mathbb{P}_{\theta}}
\newcommand{\E}{\mathbb{E}_{\theta}}
\newcommand{\sol}{v_{\xi}(t,x; x_0)}
\newcommand{\mass}{v_{\xi}(t; x_0)}
\renewcommand{\i}{i \mspace{-3mu} \imath}
 \newtheorem{thm}{Theorem}[section]
 \newtheorem{lem}[thm]{Lemma}
 \newtheorem{prop}[thm]{Proposition}
 \theoremstyle{definition}
 \newtheorem{Def}[thm]{Definition}
 \theoremstyle{remark}
 \newtheorem{rem}[thm]{Remark}
 \numberwithin{equation}{section}
\newtheorem*{spec}{Spectral control}{\bf}{\it}
\begin{document}

\title[Parabolic Anderson problem with  
a perturbed lattice potential]
{Moment asymptotics for the parabolic Anderson problem with  
a perturbed lattice potential}

%----------Author 1
\author[R. Fukushima]{Ryoki Fukushima}

\address{%
Department of Mathematics\\ 
Kyoto University\\ 
Kyoto 606-8502\\ 
JAPAN\\
Current address:\\
Department of Mathematics\\
Tokyo Institute of Technology\\ 
Tokyo 152-8551\\
 JAPAN
}

\email{ryoki@math.titech.ac.jp}

\thanks{The first author was partially supported by JSPS Fellowships for Young Scientists.\\
The second author was partially supported by  KAKENHI (21540175)}
%----------Author 2
\author[N. Ueki]{Naomasa Ueki}
\address{Graduate School of Human and Environmental Studies\\
Kyoto University\\
Kyoto 606-8501\\ 
JAPAN} 
\email{ueki@math.h.kyoto-u.ac.jp}

\begin{abstract}
The parabolic Anderson problem with a random potential obtained by attaching 
a long tailed potential around a randomly perturbed lattice 
is studied. 
The moment asymptotics of the total mass of the solution is derived.
The results show that the total mass 
of the solution concentrates on a small set in the space of configuration. 
\end{abstract}
 \maketitle
%%% Introduction %%%
\section{Introduction}\label{intro}
This paper is a continuation of \cite{FU09a}. 
We consider the initial value problem of the heat equation with a random potential
\begin{equation}
\begin{aligned}
	\frac{\partial}{\partial t}v(t,x) &= \frac{1}{2} \Delta v(t,x) - V_{\xi}(x) v(t,x), \quad 
	&&(t, x) \in (0, \infty) \times \R^d, \\
	v(0,x) &= \delta_{x_0}(x), && x \in \R^d, \label{PAM}
\end{aligned}
\end{equation}
where $\Delta$ is the Laplacian, $x_0 \in \R^d$, and
\begin{equation}
	V_{\xi}(x) := \sum_{q \in \Z^d} u(x-q-\xi_q) \label{potential}
\end{equation}
with $\xi =(\xi_q)_{q \in \Z^d}$ a collection of independent and identically 
distributed random vectors. 
Under appropriate assumptions, \eqref{PAM} has a solution $\sol$ 
represented by the Feynman-Kac formula 
\begin{equation}
 \sol
	=E_{x_0}\biggl[\exp\set{-\int_0^tV_{\xi}(B_s)ds}\biggl| B_t=x \biggr] 
	 \frac1{(2\pi t)^{d/2}}\exp \Big( -\frac{|x-x_0|^2}{2t}\Big),\label{solution}
\end{equation}
where $(B_s)_{s\ge 0}$ is the Brownian motion on $\R^d$ and $E_{x_0}$ is 
the expectation of the Brownian motion starting at $x_0$.

In this paper, we investigate the long time asymptotics of the moment of the total mass
\begin{equation}
   \mass :=\int_{\R^d}\sol dx_0
	=E_{x_0}\biggl[\exp\set{-\int_0^tV_{\xi}(B_s)ds}\biggr].\label{total mass}
\end{equation}
Our main result is Theorem \ref{leading term simplified}, which deals with the first moment.
We also obtain results on the higher moments in Section \ref{int} below.

The operator $H_{\xi}=-\Delta /2+V_{\xi}$ is the Hamiltonian of the so-called  
random displacement model in the theory of random Schr\"odinger operators and 
there has recently been an increase in research, see e.g.~\cite{BLS07, BLS08, Fuk09a, FU09a, GK09}.  
Also, the initial value problem~\eqref{PAM} itself is called the ``parabolic Anderson problem'' 
in literature (see e.g.~a survey article by G\"artner and K\"onig~\cite{GK04}).
The solution of the parabolic Anderson problem is believed to concentrate on a relatively small region
and there are many results support this concentration. 
We shall discuss this aspect in more detail in Subsection~\ref{Intermittency} below. 

%%% Basic assumptions %%%
\subsection{Basic assumptions}
We are mainly interested in the case where the single site potential and the displacement 
variables satisfy the following: 
(i) $u$ is a nonnegative function belonging to the Kato class $K_d$ (cf.~\cite{Szn98}) and 
\begin{equation}
	u(x) =C_0|x|^{-\alpha}(1+o(1))\label{single-decay}
\end{equation} 
as $|x|\to \infty$ for some $\alpha >d$ and $C_0>0$; (ii) each $\xi_q$ has the explicit 
distribution
\begin{equation}
	\P(\xi_q \in dx) = \frac{1}{Z(d,\theta)} \sum_{p \in \Z^d} 
	\exp (-|p|^{\theta} )\delta_p(dx) \label{disc-dist}
\end{equation}
for some $\theta >0$ and the normalizing constant $Z(d,\theta)$.
 
We also consider the case that $u$ is a nonpositive function. 
For this case, we assume 
$\inf u=u(0)>-\infty$, \eqref{single-decay} for some $C_0<0$, and that for any 
$\e >0$, there exists $R_{\e}>0$ 
such that $u(x) \le u(0)+\e$ for $|x|<R_{\e}$. 
Nevertheless, our main interest is the nonnegative case and we assume $u \ge 0$ 
unless otherwise specified. 

%%% Motivation %%%
\subsection{Motivation}
In Theorem~6.3 of the preceding paper~\cite{FU09a},
we have shown the following:

\begin{thm}\label{F-K}
Let us define 
\begin{equation}
 c(d, \alpha, \theta, C_0):= \int_{\R^d}dq \inf_{y\in \R^d}( \frac{C_0}{|q+y|^{\alpha}}+|y|^{\theta}).\label{const}
\end{equation} 
\begin{enumerate}
\item{Assume that $d=1$ and that $\essinf _{B(R)}u>0$ for any $R\ge 1$ if $\alpha \le 3$.
Then we have 
\begin{equation}
 \log \E[\mass ] 
  \begin{cases}
   \sim -t^{(1+\theta )/(\alpha+\theta )}c(1, \alpha, \theta, C_0)&(1<\alpha<3)\\[8pt]
   \asymp -t^{(1+\theta )/(3+\theta )} & (\alpha = 3), \\[5pt]
   \displaystyle 
   \sim {-t^{(1+\theta )/(3+\theta )}\frac{3+\theta}{1+\theta}\big( \frac{\pi ^2}8 \big) ^{(1+\theta )/(3+\theta )}}
   & (\alpha >3)
 \end{cases}\label{F-K-1}
\end{equation}
as $t \to \infty$, where $f(t) \sim g(t)$ means $\lim_{t\to \infty}f(t)/g(t)=1$
and $f(t) \asymp g(t)$ means $0<\varliminf _{t\to \infty}f(t)/g(t)\le \varlimsup _{t\to \infty}f(t)/g(t)<\infty$.\\
}
\item{Assume that $d=2$ and that $\essinf _{B(R)}u>0$ for any $R\ge 1$ if $\alpha \le 4$.
Then we have  
\begin{equation}
 \log \E[\mass ]  
  \begin{cases}
   \sim -t^{(2+\theta )/(\alpha+\theta )}
   c(2, \alpha, \theta, C_0) & (2 < \alpha < 4), \\[8pt]
   \asymp -t^{(2+\theta )/(4+\theta )} & (\alpha = 4), \\[5pt]
   \asymp {-t^{( 2+\theta )/(4+\theta )} (\log t)^{-\theta /(4+\theta )}} & (\alpha >4)
 \end{cases}\label{F-K-2}
\end{equation}
as $t \to \infty$.\\ 
}
\item{Assume that $d \ge 3$ and that $\essinf _{B(R)}u>0$ for any $R\ge 1$ if $\alpha \le d+2$.
Then we have 
\begin{equation}
 \log \E[\mass ] 
  \begin{cases}
   \sim -t^{(d+\theta )/(\alpha+\theta )}
   c(d, \alpha, \theta, C_0) & (d < \alpha < d+2), \\[10pt]
   \asymp {-t^{(d+\theta\mu )/(d+2+\theta\mu )}} & (\alpha \ge d+2)
 \end{cases} \label{F-K-3}
\end{equation}
as $t \to \infty$, where 
\begin{equation}
\mu =\frac{2(\alpha -2)}{d(\alpha -d)}. \label{F-K-mu}
\end{equation}\\ }
\item{Assume $u \le 0$, $\sup u=u(0)>-\infty$, and the existence of $R_{\e}>0$ for any $\e >0$
such that $\esssup _{B(R_{\e})}u\le u(0)+\e$. 
Then we have 
\begin{equation}
 \log \E[\mass ] \sim t^{1+d/\theta} c_-(d, \theta, u(0)) \label{F-K-4}
\end{equation}
as $t \to \infty$, where
\begin{equation}
 c_-(d, \theta, K):=\frac{2\pi ^{d/2}\theta |K|^{1+d/\theta}}{d(d+\theta )\Gamma (d/2)}\label{const-n}
\end{equation} 
for $K\in {\mathbb R}$. }
\end{enumerate}
\end{thm}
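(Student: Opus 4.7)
My plan is to treat the upper and lower bounds separately in each regime, building on the observation that the explicit form \eqref{disc-dist} of the displacement distribution allows us to integrate out $\xi$ before analysing the path integral. By Fubini and independence of the $\xi_q$'s,
\begin{equation*}
\E[\mass ] = E_{x_0}\Big[\prod_{q \in \Z^d}\frac{1}{Z(d,\theta)}\sum_{p \in \Z^d}\exp\Big(-|p|^\theta - \int_0^t u(B_s-q-p)\,ds\Big)\Big].
\end{equation*}
If the Brownian path stays in a bounded set, a Laplace argument controls each inner sum by $\exp(-\inf_{y\in\Z^d}\{|y|^\theta+tu(-q-y)\})$; rescaling $y = t^{1/(\alpha+\theta)}z$ and $q = t^{1/(\alpha+\theta)}q'$ turns the $q$-sum into a Riemann sum for $t^{(d+\theta)/(\alpha+\theta)}c(d,\alpha,\theta,C_0)$, identifying both the exponent and the constant in the long-range lines of \eqref{F-K-1}--\eqref{F-K-3}.

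For the \textbf{upper bound}, I would decompose the path expectation according to whether $\sup_{s\le t}|B_s-x_0|$ exceeds a confinement radius $R_t$ tuned to the regime. On the confinement event $u(B_s-q-p)$ can be replaced by $u(x_0-q-p)$ uniformly in $q,p$ (here the Kato-class and the polynomial-tail hypothesis \eqref{single-decay} enter), after which the $q$-factor is deterministic and the Laplace estimate above delivers the claimed rate; the escape term is absorbed by the Gaussian factor in \eqref{solution}. In the short-range regime $\alpha\ge d+2$, the potential alone is too weak, so I would instead bound the quenched integral by the principal Dirichlet eigenvalue of $H_\xi$ on a box of side $L_t$ and then optimise $L_t$ against the displacement entropy; this produces the mixed exponent $(d+\theta\mu)/(d+2+\theta\mu)$ with $\mu$ as in \eqref{F-K-mu}. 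At the critical values $\alpha = d+2$ (and $\alpha=3$ when $d=1$) the two mechanisms balance and only matching orders of magnitude can be expected.

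For the \textbf{lower bound}, fix a large box $\Lambda_t$ and a deterministic configuration $(p_q^\ast)_{q\in\Lambda_t}$ approximating the minimiser in \eqref{const}; restricting to the event $\{\xi_q = p_q^\ast\ \forall q\in\Lambda_t\}$ and to paths inside $B(x_0,1)$ gives
\begin{equation*}
\E[\mass ]\ \ge\ \P(\xi_q = p_q^\ast\ \forall q\in\Lambda_t)\,E_{x_0}\bigl[\exp(-tV^\ast)\,;\,B_{[0,t]}\subset B(x_0,1)\bigr],
\end{equation*}
where $V^\ast$ is the resulting (nearly constant) potential. The product reproduces the claimed rate once the Riemann-sum convergence is verified. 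The short-range case is analogous but one clears a whole ball by displacement and invokes a small-eigenvalue bound inside it. For part~(iv) the $u\le 0$ assumption reverses the picture: one \emph{piles up} lattice points at the origin by setting $\xi_q=-q$ for $|q|\le R$, incurring cost $\sum_{|q|\le R}|q|^\theta \asymp R^{d+\theta}$ but creating a potential depth $\asymp R^d|u(0)|$, and optimising in $R\asymp(|u(0)|t)^{1/\theta}$ yields the exponent $1+d/\theta$ together with the explicit constant $c_-(d,\theta,u(0))$ of \eqref{const-n}.

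The main obstacle I anticipate is the passage from the discrete minimisation over $y\in\Z^d$ to the continuous infimum defining $c(d,\alpha,\theta,C_0)$: a uniform Riemann-sum approximation in the rescaled variables is needed, with care both where the integrand is large (small $q$) and where the optimal shift vanishes (large $q$). A secondary difficulty is the uniform control of the path-dependence error $u(B_s-q-p)-u(x_0-q-p)$ across all $q$ and $p$, which is precisely what the essinf-type hypothesis on $u$ at the critical decay rates is designed to buy.
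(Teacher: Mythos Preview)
The theorem you are attempting to prove is \emph{not} proved in this paper. It is quoted verbatim as ``Theorem~6.3 of the preceding paper~\cite{FU09a}'' and serves purely as background motivation for the present work; the paper's own contribution is Theorem~\ref{leading term simplified}, which refines the $\asymp$ lines of \eqref{F-K-1}--\eqref{F-K-3} into variational expressions. Consequently there is no proof in this paper to compare your proposal against.

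That said, your sketch is broadly in line with what one expects the argument in \cite{FU09a} to look like, and the paper gives indirect confirmation: in the proof of Proposition~\ref{pp-asy} (the $p$-th moment version of the $\alpha<d+2$ line) the authors explicitly refer back to ``our proof of Proposition~2.2 in~\cite{FU09a}'' and use precisely the ingredients you describe --- restricting the Brownian path to a small box $\Lambda_R$, restricting to a good configuration event $\Xi_t$, and then performing a Laplace-type estimate on the product over $q$. Your identification of the two main technical obstacles (the discrete-to-continuous Riemann-sum passage for $c(d,\alpha,\theta,C_0)$, and the uniform replacement of $u(B_s-q-p)$ by $u(x_0-q-p)$) is accurate; the $\essinf$ hypothesis is indeed there to handle the near-field contribution when the tail alone is too weak.

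One point where your outline is underspecified: for the short-range upper bound ($\alpha\ge d+2$) you say you would ``bound the quenched integral by the principal Dirichlet eigenvalue of $H_\xi$ on a box of side $L_t$ and then optimise $L_t$ against the displacement entropy''. This is the right picture, but producing the exponent $(d+\theta\mu)/(d+2+\theta\mu)$ with the specific $\mu$ of \eqref{F-K-mu} requires a nontrivial lower bound on $\lambda_\xi^1$ that quantifies how far points must be displaced to clear a region --- essentially the content of the density/rarefaction dichotomy and Lemma~\ref{r-number} in the present paper. A naive optimisation over box size alone will not produce $\mu$; you need the entropy estimate for creating a cleared region of prescribed volume, which is where the exponent $\mu=2(\alpha-2)/(d(\alpha-d))$ actually arises.
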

We have precise forms of the leading terms for the one-dimensional case with $\alpha \ne 3$, the general dimensional case with $d< \alpha <d+2$, and the case of $u \le 0$. Furthermore, if one goes into the proof of these results, it will be observed that 
only a very small set in $\xi$-space contributes the leading terms of the asymptotics. 
More precisely, when $u \ge 0$ and $d<\alpha<d+2$ for instance, 
the $y$-variable in the definition of $c(d, \alpha, \theta, C_0)$ corresponds to 
the displacement $\xi_q$ from $q$. 
Therefore taking the infimum in the definition of $c(d, \alpha, \theta, C_0)$ with respect to $y$ means minimizing the sum of the 
contribution of $u(-q-\xi_q)$ to $V_{\xi}(0)$ and the cost for displacement for each $q$. 
With these interpretation, the above theorem says that only the \emph{optimal} configuration contributes the leading term. 
This kind of concentration in $\xi$-space is sometimes regarded as a collateral evidence 
of the aforementioned spatial irregularity of $\sol$, see Sect.~1.3 of~\cite{GK04}. 
The aim of this paper is to find a variational expression for the leading part in the remaining cases 
to see a concentration phenomenon similar to above. 

%%% Main result %%%
\subsection{Main result}\label{MEO}
We need to introduce some notations to state the results. 
We write $\Lambda_r$ for $[-r/2,r/2]^d$ and introduce scaling factors 
\begin{equation}
	r=
	\begin{cases}
	t^{1/(3+\theta)}&(d=1\textrm{ and }\alpha = 3),\\
	t^{1/(4+\theta)}(\log t)^{\theta/(8+2\theta)} & (d=2\textrm{ and }\alpha > 4), \\
	t^{1/(d+2+\mu\theta)} & (d \ge 3\textrm{ and }\alpha\ge d+2\textrm{ or }(d,\alpha )=(2,4)).
	\end{cases}\label{scale}
\end{equation}
For any open set $U$ and $\xi =(\xi _q)_{q\in \Z ^d}\in (\Z^d)^{\Z^d}$, 
we denote by $\lambda_{\xi}^r(U)$ the bottom of the spectrum of 
$$
 -\frac{1}{2}\Delta + V^r_{\xi}
$$
in $U$ with the Dirichlet boundary condition, where
$$
V^r_{\xi}(x):=\sum_{q \in \Z^d} r^2 u(rx-q-\xi_q).
$$ 
Finally, let $\Omega_t=(\Z^d)^{\Lambda_t \cap \Z^d}$, which is the set of possible configurations
of $(\xi_q)_{q \in \Lambda_t \cap \Z^d}$, and we write $\lambda_{\xi}^r(U)$ for the same 
object as above also for $\xi \in \Omega_t$ with the potential replaced by
$$
V^r_{\xi}(x):=\sum_{q \in \Z^d\cap \Lambda _t} r^2 u(rx-q-\xi_q).
$$ 

\begin{thm}\label{leading term simplified}
Assume that $\alpha =3$ for $d=1$ and $\alpha \ge d+2$ for $d\ge 2$. 
Under the above setting, we have
\begin{equation}
 \log \E[\mass ]\\
 = - tr^{-2}\inf_{\zeta \in \Omega_t}\Biggl\{\lambda_{\zeta}^r(\Lambda_{t/r})
 +\gamma (r)^{\theta}\sum_{q \in \Lambda_t \cap \Z ^d} r^{-d}\Bigl|\frac{\zeta_q}{r}\Bigr|^{\theta}\Biggr\}
 (1+o(1))
 \label{F-K upper simplified}
\end{equation}
as $t$ goes to $\infty$, where 
\begin{equation}
  \gamma (r)=
 \begin{cases}
 	1 & (d=1\textrm{ and }\alpha=3),\\
	\sqrt{(4+\theta )\log r} &(d=2\textrm{ and }\alpha >4),\\
	r^{1-\mu}  &(d \ge 3\textrm{ or }(d,\alpha )=(2,4)),
 \end{cases} \label{ccr}
\end{equation}
and $\mu$ is the number defined in \eqref{F-K-mu}.
\end{thm}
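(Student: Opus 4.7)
The plan is to reduce \eqref{F-K upper simplified} to a joint optimization: for each configuration $\xi$, control $\mass$ by the Dirichlet eigenvalue of $H_\xi = -\Delta/2 + V_\xi$ on a macroscopic box, and then optimize over $\xi$ weighted by its probability $\prod_q e^{-|\xi_q|^\theta}$. The crucial rescaling identity is that under $x = ry$ one has $\lambda_\xi(\Lambda_t) = r^{-2}\lambda_\xi^r(\Lambda_{t/r})$; this combines with the probability cost via the scaling relation $t = r^{d+2+\theta}\gamma(r)^{-\theta}$, which is directly verifiable from \eqref{scale} and \eqref{ccr} (exactly when $d\ne 2$, asymptotically when $d=2$), and ensures that both pieces of \eqref{F-K upper simplified} appear with matching prefactor $tr^{-2}$.

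\emph{Upper bound.} First discard the contribution from Brownian paths leaving $\Lambda_t$ via a standard exponential exit estimate at subleading cost. For confined paths, the Dirichlet eigenvalue bound gives
\begin{equation*}
	E_{x_0}\!\left[\exp\!\left\{-\int_0^t V_\xi(B_s)\,ds\right\}\mathbf{1}_{\{B_{[0,t]}\subset \Lambda_t\}}\right] \le P_t\, e^{-t\lambda_\xi(\Lambda_t)}
\end{equation*}
with $P_t$ polynomial in $t$. Taking $\E$ and controlling the resulting sum over $\zeta \in \Omega_t$ --- after truncating to configurations with $\sum_q|\zeta_q|^\theta$ not exceeding a multiple of the expected leading rate --- yields
\begin{equation*}
	\E[\mass] \le e^{o(tr^{-2})}\sup_{\zeta \in \Omega_t}\exp\!\left\{-t\lambda_\zeta(\Lambda_t)-\sum_{q\in\Lambda_t\cap\Z^d}|\zeta_q|^\theta\right\}.
\end{equation*}
Rescaling via the identity above produces the upper half of \eqref{F-K upper simplified}.

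\emph{Lower bound.} Fix a near-minimizer $\zeta^* \in \Omega_t$ of the variational expression in \eqref{F-K upper simplified}. Conditioning on $\xi|_{\Lambda_t \cap \Z^d} = \zeta^*$ costs probability $\exp(-\sum_q|\zeta^*_q|^\theta)$ up to subleading factors. The resulting conditional expectation is bounded below by confining the Brownian motion to a sub-box of $\Lambda_t$ on which the Dirichlet ground state $\psi_{\zeta^*}$ of $H_{\zeta^*}$ is essentially constant; a standard eigenfunction-expansion argument then gives $\exp\{-t\lambda_{\zeta^*}(\Lambda_t)(1+o(1))\}$. Combining the two factors and rescaling matches the upper bound.

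\emph{Main obstacle.} The principal difficulty will be the entropy control in the upper bound: the sum over $\zeta \in \Omega_t$ must be dominated by its supremum up to a factor of size $e^{o(tr^{-2})}$. This is especially delicate in the case $d=2$, $\alpha > 4$, where $\gamma(r)^\theta = ((4+\theta)\log r)^{\theta/2}$ is only logarithmic in $r$, so no algebraic cushion is available to absorb counting corrections; one presumably has to argue that near-optimal configurations are rigid and the total number of $\zeta$'s achieving the exponent within $o(tr^{-2})$ of the supremum is itself subexponential in $tr^{-2}$. A secondary concern is that the long tail of $u$ couples distant displacements in $V_\xi^r$ on the growing box $\Lambda_{t/r}$, so one must verify that $\lambda_\xi^r(\Lambda_{t/r})$ is effectively determined by a bounded (or at most subpolynomial) portion of the configuration, so that the variational problem truly localizes.
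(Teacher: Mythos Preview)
Your overall architecture is right, and the scaling identity $t = r^{d+2+\theta}\gamma(r)^{-\theta}$ is exactly what makes the two pieces of the variational expression commensurable. However, the upper bound as you have written it has a genuine gap, and it is precisely the one you flag as the ``main obstacle'' without resolving.

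The issue is quantitative. After the Dirichlet eigenvalue bound you want to replace $\E[\cdots]$ by a supremum over $\zeta\in\Omega_t$, absorbing the sum into an $e^{o(tr^{-2})}$ factor. But $\Omega_t=(\Z^d)^{\Lambda_t\cap\Z^d}$ has roughly $t^d$ coordinates, so even restricting each $\zeta_q$ to two values already gives $2^{t^d}$ configurations, and $t^d$ dominates $tr^{-2}$ by a large power of $t$. Truncating to $\sum_q|\zeta_q|^\theta\le C\,tr^{-2}$ does not help: the number of ways to place $O(tr^{-2})$ units of ``mass'' among $t^d$ sites is still $\exp\{c\,tr^{-2}\log t\}$ at best, and no rigidity of near-minimizers can be established \emph{a priori} without first knowing something about the structure of the eigenvalue problem. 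The long tail of $u$ is not the problem here; the raw combinatorics of $\Omega_t$ is.

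The paper resolves this by Sznitman's method of enlargement of obstacles. One first shows (spectral control) that $\lambda_\xi^r(\Lambda_{t/r})$ is, up to $r^{-\rho}$, the Dirichlet eigenvalue on a much smaller random region $\underline{\mathcal R}_r(\xi)\subset\Lambda_{t/r}$, the complement of boxes where the point set $\{(q+\xi_q)/r\}$ is sufficiently dense. A separate probabilistic lemma then shows $|\underline{\mathcal R}_r(\xi)|<r^\chi$ except on an event of probability $o(\E[\mass])$. This reduces the relevant configurations to pairs $(R_r,\zeta)$ with $R_r$ a lattice animal of volume $<r^\chi$ and $\zeta$ living only on $r[R_r:l]\cap\Z^d$; the cardinality of this set is $\exp\{o(tr^{-2})\}$ by direct counting, and \emph{only then} does the supremum replace the sum. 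Your proposal contains no analogue of this localization step, and without it the upper bound does not close.

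Your lower bound sketch is closer to what the paper does, but two points you pass over are nontrivial: one must locate a small cube where the ground state $\phi^*$ carries nonnegligible mass \emph{and} the scaled potential is bounded (the paper's Lemma~2.2), and one must control the tail contribution from $\{u(\cdot-q-\xi_q):q\notin r[R_r:l]\}$ on a set of positive probability (Lemma~2.5). Both require work beyond ``eigenfunction expansion.''
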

The interpretation of this result is as follows. 
For a given configuration $\xi = \zeta$, 
%the behavior of the total mass is governed by the bottom of the spectrum as 
the eigenfunction expansion indicates that 
\begin{equation}
	v_{\zeta}(t,x)=\exp\set{-\lambda_{\zeta}^1(\Lambda_{t})t(1+o(1))}\label{for fixed}
\end{equation}
since the contribution from outside $\Lambda_{t}$ is negligible. 
On the other hand, the probability to have such a configuration is formally given by 
\begin{equation}
	\P(\xi = \zeta)=\exp\Biggl\{-\sum_{q \in \Z^d}|\zeta_q|^{\theta}(1+o(1))\Biggr\}.
\end{equation}
Therefore, the variational problem to minimize the sum of the decay rate for fixed configuration 
and the cost to realize it has the form 
\begin{equation}
	\inf_{\zeta}\Biggl\{\lambda_{\zeta}^1(\Lambda_{t})t
	+\sum_{q \in \Z ^d}|\zeta_q|^{\theta}\Biggr\}, 
\end{equation}
which becomes almost the same as the right hand side of~\eqref{F-K upper simplified} 
after the scaling. Hence, the above theorem says that only the \emph{optimal} configuration 
contributes the leading part of the asymptotics, just as in the heavy tailed case. 

%%% Proof of Theorem 1.2 %%%
\section{Proof of Theorem \ref{leading term simplified}}
In Theorem 2.9 of \cite{Fuk09a}, the leading term for $\log \E[\mass ]$ 
with compactly supported $u$ was investigated by using Sznitman's ``method of enlargement of obstacles''. 
We shall apply the same method here.

%%% MEO %%%
\subsection{Method of enlargement of obstacles for the multidimensional case}
Let us first recall the elements of the methods developed in~\cite{Fuk09a}. 
It is basically a coarse graining method to establish a certain variational principle 
by reducing the number of configurations contributing the asymptotics. 
In this subsection, we define a set of reduced configurations and show that its cardinality is indeed negligible compared with the decay of $\E[\mass ]$
(see \eqref{conf-set} and \eqref{s-number} below). 

We take $\chi \in ((\mu -2/d)\theta ,\mu \theta )$ and $\eta \in (0,1)$ so small that 
$$\chi > \left(\mu - \frac{2}{d}\right)\theta +2\eta^2+\left({d-2}+\frac{2\theta}{d} \right)\eta$$
and define
$$\gamma := \frac{d-2}{d}+\frac{2\eta}{d}<1.$$
We further introduce a notation concerning a diadic decomposition of $\R^d$. 
For each $k\in \Z _+$, let $\mathcal{I}_k$ be the collection of indices 
$\i = (i_0, i_1, \ldots , i_k)$ with $i_0 \in \Z^d$ and $i_1, \ldots , i_k \in \set{0, 1}^d$. 
For each $\i \in \mathcal{I}_k$, we associate the box 
$$C_{\i} = q_{\i} + 2^{-k}[0,1]^d,$$
where
$$q_{\i} = i_0 + 2^{-1}i_1 + \cdots +2^{-k}i_k.$$ 
For $\i \in \mathcal{I}_k$ and $\i' \in \mathcal{I}_{k'}$ with $k' \le k$, 
$\i \preceq \i'$ means that the first $k'$ coordinates coincide. 
Finally, we introduce 
$$ n_{\beta} = \left[ \beta \,\frac{\log r}{\log 2} \right]$$
for $\beta > 0$ so that $2^{-n_{\beta}-1} < r^{-\beta} \le 2^{-n_{\beta}}$. 

We can now define the \underline{\smash[b]{density}} set, which we can discard 
from the consideration. 

\begin{Def}\label{def1}
 We call a unit cube $C_q$ with $q \in \Z^d$ a \underline{\smash[b]{density}} box if all 
 $q \preceq \i \in \mathcal{I}_{n_{\eta\gamma}}$ satisfy the following: for at least half of $\i \preceq \i' \in \mathcal{I}_{n_{\gamma}}$,
 \begin{equation}
  (q_{\i'} + 2^{-n_{\gamma}-1}[0,1]^d)\cap \{ (q+\xi _q)/r : q\in \Z ^d\} \ne \emptyset .\label{myden}
 \end{equation}
 The union of all \underline{\smash[b]{density}} boxes is denoted by $\smash[b]{\underline{\den}}_r(\xi)$. 
\end{Def}

The following theorem tells us that we can replace $\smash[b]{\underline{\den}}_r(\xi)$ by a hard trap
without causing a substantial increase in the principal eigenvalue. 

\begin{spec}  
There exists $\rho>0$ such that for all $M>0$ and sufficiently large $r$, 
\begin{equation}
 \sup_{\xi \in (\R ^d)^{\Z ^d}}
 \left(\lambda^{r}_{\xi}\left( \myrare \right)\wedge M 
 -\lambda^{r}_{\xi}\left( \Lambda _{t/r}  \right)\wedge M \right) \le r^{-\rho},\label{spec}
\end{equation}
where $\myrare=\Lambda _{t/r} \setminus \myden$. 
\end{spec}

By Proposition 2.7 in \cite{Fuk09a}, the proof of this theorem is reduced to the extension of Theorem 4.2.3 in \cite{Szn98} 
from the compactly supported single site potentials to the Kato class single site potentials, which is straightforward. 

For $\myrare$, we can give the following quantitative estimate on its volume: 

\begin{lem} \label{r-number}
{\upshape (i)} There exists a positive constant $c_1$ independent of $r$ such that
$|\myrare| \ge r^{\chi}$ implies
\begin{equation}
	 \sum_{q \in \Lambda_t\cap \Z^d} |\xi_q|^{\theta} \ge c_1 r^{d(1-\eta\gamma)+(1-\gamma)\theta+\chi}.
\end{equation}
{\upshape (ii)} There exists a positive constant $c_2$ independent of $r$ such that
\begin{equation}
	\P(|\myrare| \ge r^{\chi})
	\le \exp (-c_2 r^{d(1-\eta\gamma)+(1-\gamma)\theta+\chi}).\label{r-number1} 
\end{equation}
In particular, $\P(|\myrare| \ge r^{\chi}) = o(\E[\mass ])$.
\end{lem}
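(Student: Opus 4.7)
The plan is to use the hierarchical structure of Definition \ref{def1} to extract, from the event $\{|\myrare| \ge r^\chi\}$, a large family of indices $q \in \Z^d$ for which $|\xi_q|$ is forced to be of order at least $r^{1-\gamma}$. This geometric output immediately proves (i) by a counting argument, and simultaneously drives the probability estimate in (ii) through a union bound.

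In more detail, each non-density unit box $C_q \subset \myrare$ contains, by definition, at least one level-$n_{\eta\gamma}$ sub-cube in which strictly more than half of the level-$n_\gamma$ sub-cubes have their corner cube $q_{\i'} + 2^{-n_\gamma-1}[0,1]^d$ disjoint from $\{(q+\xi_q)/r : q \in \Z^d\}$. Rescaling by $r$, each such empty corner becomes a cube of side $\asymp r^{1-\gamma}$ in the original coordinates, containing $\asymp r^{d(1-\gamma)}$ lattice points $q \in \Z^d$; a positive fraction of these lie deep enough in the interior that the constraint ``$q + \xi_q$ avoids this cube'' forces $|\xi_q| \gtrsim r^{1-\gamma}$. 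The key observation is that these forced indices are distinct across different bad regions, because distinct bad unit boxes are disjoint and within each the chosen level-$n_\gamma$ sub-cubes are disjoint. Multiplying the three counts---$r^\chi$ bad unit boxes, $\gtrsim r^{d\gamma(1-\eta)}$ empty corner cubes per bad box, and $\gtrsim r^{d(1-\gamma)}$ forced indices per corner cube---and using $|\xi_q|^\theta \gtrsim r^{(1-\gamma)\theta}$ together with the identity $d\gamma(1-\eta) + d(1-\gamma) = d(1-\eta\gamma)$ yields (i). In fact this argument delivers the stronger statement that there exists $S \subset \Lambda_t \cap \Z^d$ with $|S| \gtrsim r^{\chi + d(1-\eta\gamma)}$ on which $|\xi_q| \gtrsim r^{1-\gamma}$ uniformly.

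For (ii) I would apply a union bound to this stronger statement. With $N_1 \asymp r^{\chi + d(1-\eta\gamma)}$ and $R \asymp r^{1-\gamma}$, the tail estimate $\P(|\xi_q| \ge R) \lesssim R^{d-\theta} e^{-R^\theta}$ obtained directly from \eqref{disc-dist} gives
\begin{equation*}
\P(|\myrare| \ge r^\chi) \le \binom{|\Lambda_t \cap \Z^d|}{N_1} \P(|\xi_q| \ge R)^{N_1} \le \exp\bigl( N_1 \log(e |\Lambda_t|/N_1) - c N_1 R^\theta \bigr).
\end{equation*}
Since $R^\theta$ is a positive power of $r$ while $\log |\Lambda_t| = O(\log r)$, the entropy term $N_1 \log(e|\Lambda_t|/N_1)$ is absorbed into $c N_1 R^\theta$ for large $r$, producing the bound stated in (ii).

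The ``in particular'' clause then reduces to an exponent comparison: by Theorem \ref{F-K}, $\log \E[\mass] \asymp -tr^{-2}$, which with the scaling \eqref{scale} has leading exponent $d + \mu\theta$ in $r$ (up to polylogarithmic factors in the $d = 2$ cases). Using $1 - \gamma = (2/d)(1-\eta)$ and the quantitative condition $\chi > (\mu - 2/d)\theta + 2\eta^2 + (d-2+2\theta/d)\eta$ built into the setup, a short algebraic check shows $\chi + d(1-\eta\gamma) + (1-\gamma)\theta > d + \mu\theta$, whence the bound of (ii) is $o(\E[\mass])$. The main obstacle is the geometric/combinatorial step of (i): juggling the two length scales (lattice spacing versus diadic refinement), selecting the ``interior'' lattice points inside each forbidden corner cube, and verifying disjointness so that the displacement costs add up without double-counting. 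Once this is cleanly in place, (ii) is a routine union bound and the final $o(\E[\mass])$ comparison is the algebraic verification that the range of $\chi$ in the setup was tailored precisely to make the exponent inequality strict.
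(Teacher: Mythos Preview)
Your approach is correct and follows essentially the same route as the paper: both arguments extract from Definition~\ref{def1} the fact that each non-density unit box forces, via its $\gtrsim r^{d\gamma(1-\eta)}$ empty level-$n_\gamma$ corner cubes, roughly $r^{d(1-\gamma)}$ lattice indices $q'$ with $|\xi_{q'}| \gtrsim r^{1-\gamma}$, and then sum (for~(i)) or bound the probability (for~(ii)). The only organizational difference is in part~(ii): the paper runs a hierarchical union bound (first over the $r^\chi$ non-density unit cubes, then for each cube over the choice of $\i$ and of the half of the $\i'$'s, using independence of the $\xi_{q'}$'s across disjoint $rC_{\i'}$), whereas you pass directly through the strengthened form of~(i) and apply a single global union bound over subsets $S\subset \Lambda_t\cap\Z^d$ of size $N_1$; both routes give the same exponent, and your version avoids the intermediate binomial coefficient $\binom{2^{d(n_\gamma-n_{\eta\gamma})}}{2^{d(n_\gamma-n_{\eta\gamma})-1}}$ at the cost of a slightly larger but still harmless entropy factor.
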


\begin{proof} 
Throughout the proof, $c_1$ and $c_2$ are positive constants whose values may change line by line. 
We consider the following necessary condition of $C_q \not\subset \myden$: 
\begin{equation}
 \begin{split}
  &\textrm{there exists an }\i \succeq q\; \textrm{in }\mathcal{I}_{n_{\eta\gamma}} \textrm{ such that for a half of }\i' \succeq \i \: \textrm{in }\mathcal{I}_{n_{\gamma}}, \\
  &\{ r^{-1}q'+r^{-1}\xi_{q'} : q'\in (rC_{\i'})\cap \Z^d\} \not\subset q_{\i'} + 2^{-n_{\gamma}-1}[0,1]^d. \label{suff}
 \end{split}
\end{equation}
Note first that 
$$ \sum_{q' \in (rC_{\i'})\cap \Z^d}|\xi_{q'}|^{\theta}
 \ge \sum_{q' \in (rC_{\i'})\cap \Z^d}|d(q', \partial (r C_{\i'}))|^{\theta}
 \ge c_1 r^{(1-\gamma)(d+\theta)}$$
for any configurations satisfying the second line in \eqref{suff}.
Thus $C_q \not\subset \myden$ implies 
\begin{equation}
\begin{split}
	\sum_{q' \in (r C_{q})\cap \Z^d}|\xi_{q'}|^{\theta}
	&\ge c_1 r^{(1-\gamma)(d+\theta)}2^{d(n_{\gamma}- n_{\eta \gamma})-1}\\
	&\ge c_2 r^{(1-\gamma)(d+\theta)+d\gamma(1-\eta)}\label{individual}
\end{split}
\end{equation}
and the first assertion follows from this. 

For the second assertion, we use \eqref{individual} and take the sum over the possibilities of the indices 
$\i$ and $\i'$'s in \eqref{suff} to obtain 
\begin{equation*}
 \begin{split}
  &\P(\textrm{\eqref{suff} is satisfied}) \\
  \le &\, 2^{d n_{\eta \gamma}} \binom{2^{d(n_{\gamma}- n_{\eta \gamma})}}{2^{d(n_{\gamma}- n_{\eta \gamma})-1}}
  \exp (-c_1 r^{(1-\gamma)(d+\theta)+d\gamma(1-\eta)})\\
  \le &\, \exp (-c_2 r^{d(1-\eta\gamma)+(1-\gamma)\theta}) 
 \end{split}
\end{equation*}
for large $r$. In the second line, the first factor represents the choice of the index $\i$ and  
the second factor the choice of the indices $\i'$'s. 
Since the variables $\{ \xi_{q'} : q'\in C_q\cap \Z ^d\}$ are independent in $q \in \Z^d$, we have 
\begin{equation*}
 \begin{split}
  \P (|\Lambda_{t/r} \setminus \myden| \ge r^{\chi})
  \le &\,t^{d r^{\chi}}\left(\exp (-c_2r^{d(1-\eta\gamma)+(1-\gamma)\theta})\right)^{r^{\chi}}\\ 
  \le &\,\exp (-c_2r^{d(1-\eta\gamma)+(1-\gamma)\theta+\chi}), 
 \end{split}
\end{equation*}
which is the desired estimate. 

Finally the third assertion follows from Theorem \ref{F-K} and our choice of $\chi$. \end{proof}

With the help of this lemma, we may restrict ourselves on some special 
configurations. To see this, we introduce some more notations. 
A domain $R$ is called a lattice animal if it is represented as
$$R=\left(\overline{\bigcup_{q\in S(R)}\Lambda _1(q)}\right)^{\circ},$$
where $S(R)\subset \Z ^d$ consists of adjacent sites. 
This means that $R$ is a combination of unit cubes connected via faces.
We set
\begin{equation}
 \begin{split}
  \S_r = \bigl\{&(R_r, \zeta =(\zeta_q)_{q \in (r[R_r: \,l])\cap \Z ^d}): 
  R_r \textrm{ is a lattice animal included in }\Lambda _{t/r}, \\
  &|R_r| < r^{\chi}, q+\zeta_q \in [\T : t^{1/(\mu \theta )}]\cap \Z ^d \textrm{ for all }q \in (r[R_r: \,l])\cap \Z ^d \bigr\}, 
 \end{split} \label{conf-set}
\end{equation}
where $l$ is a positive number specified later, 
and $[A:l]=\{ x\in \R ^d : d(x,A)<l \}$ for any $A\subset \R ^d$. 
For any $(R_r ,\zeta )\in \S_r$, we write
$$
 V^r_{\zeta}(x) = \sum_{q \in (r[R_r: \,l])\cap \Z ^d} r^2 u(rx-q-\zeta _q)
$$
with a slight abuse of the notation and define $\lambda_{\zeta}^r(R_r)$ accordingly. 

We now see that the \emph{relevant} configurations of $(\myrare, \xi)$ are only the pairs in $\S_r$.
In fact removing the points $\{ q+\xi_q: q \in \Z^d \setminus (r[R_r: l]) \}$, which should be cared in proving the lower bound, is permitted as we will show in Lemma \ref{outer} below.
We also have   
$$
 \lambda^{r}_{\xi}\left( \myrare \right)=\lambda^{r}_{\xi}\left( R_r \right)
$$
for some lattice animal $R_r$ included in $\myrare$ and 
$$\P (q+\xi_q \notin [\T : t^{1/(\mu \theta )}] \text{ for some }q \in (r[R_r: l])\cap \Z ^d)$$ 
decays exponentially in $t$. 
The latter easily follows by observing that 
$$d(r[R_r: l], [\T : t^{1/(\mu \theta )}]^c) > t^{1/\theta},$$
which is due to $lr + t^{1/\theta} <t^{1/(\mu \theta )}$, for large $t$. 

The key point in our coarse graining method is that 
the number of relevant configurations is estimated as 
\begin{equation}
\# \S_r \le t^{dr^{\chi}}(t+2t^{1/(\mu \theta )})^{dr^{d+\chi}c(1+l)}=o(\E[\mass ]^{-1}) \label{s-number}
\end{equation}
by an elementary counting argument, where $c$ is a finite constant depending only on $d$.
The second relation comes from our choice of $\chi$. 

%%% multidimensional case %%%
\subsection{Proof of a modified statement for the multidimensional case}
We state and prove slightly modified versions of Theorem~\ref{leading term simplified} 
in this section. They are shown to be equivalent to Theorem~\ref{leading term simplified}
in Subsection 2.4 below. Let us start with the multidimensional case. 

\begin{thm}\label{leading term}
Let $d \ge 2$ and assume  
the setting of Theorem~\ref{leading term simplified}. Then we have the following:
\begin{enumerate}
\item{For any $\e>0$ and $l > 0$, there exists $t_{\e ,\,l}>0$ such that 
\begin{equation}
\begin{split}
 &t^{-1}r^2\log \E[\mass ]\\
 &\quad \le -(1-\e)\inf_{(R_r,\, \zeta) \in \S_r}\Biggl\{\lambda_{\zeta}^r(R_r)
 +\gamma (r)^{\theta}\sum_{q \in (r[R_r: \,l])\cap \Z ^d} r^{-d}\Bigl|\frac{\zeta_q}{r}\Bigr|^{\theta}\Biggr\}
 \label{F-K upper}
\end{split}
\end{equation}
for any $t\ge t_{\e ,\,l}$, where $\gamma (r)$ is the function defined in \eqref{ccr}.}
\item{If $\alpha >d+2$, then for any $\e>0$ and $l>0$, there exists $t_{\e ,\,l}>0$ such that 
\begin{equation}
\begin{split}
 &t^{-1}r^2\log \E[\mass ]\\
 &\quad \ge -(1+\e)\inf_{(R_r,\, \zeta) \in \S_r}\Biggl\{\lambda_{\zeta}^r(R_r)
 +\gamma (r)^{\theta}\sum_{q \in (r[R_r: \,l])\cap \Z ^d} r^{-d}\Bigl|\frac{\zeta_q}{r}\Bigr|^{\theta}\Biggr\}
 \label{F-K lower}
\end{split}
\end{equation}
for any $t\ge t_{\e ,\,l}$.
}
\item{If $\alpha =d+2$, then for any $\e>0$, there exist $t_{\e}>0$ and $l_{\e}>0$ such that 
\begin{equation}
\begin{split}
 &t^{-1}r^2\log \E[\mass ]\\
 & \quad \ge -(1+\e)\inf_{(R_r,\, \zeta) \in \S_r}\Biggl\{\lambda_{\zeta}^r(R_r)
 +\gamma (r)^{\theta}\sum_{q \in (r[R_r: \,l])\cap \Z ^d} r^{-d}\Bigl|\frac{\zeta_q}{r}\Bigr|^{\theta}\Biggr\}
 \label{F-K lower2}
\end{split}
\end{equation}
for any $t\ge t_{\e}$ and $l\ge l_{\e}$.
}
\end{enumerate}
\end{thm}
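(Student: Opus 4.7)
The approach is the method of enlargement of obstacles as set up in the previous subsection. Brownian scaling $y=x/r$, $u=s/r^{2}$ turns \eqref{total mass} into
\begin{equation*}
 \mass=E_{x_{0}/r}\Biggl[\exp\Biggl\{-\int_{0}^{t/r^{2}}V^{r}_{\xi}(\widetilde{B}_{u})\,du\Biggr\}\Biggr],
\end{equation*}
so the problem is rephrased as one with potential $V^{r}_{\xi}$ on the time horizon $t/r^{2}$. A direct inspection of \eqref{scale} and \eqref{ccr} shows that the identity $tr^{-2-d-\theta}\gamma(r)^{\theta}=1$ holds in all relevant regimes, and this identity is what eventually converts a raw displacement cost $|\zeta_{q}|^{\theta}$ into the rescaled form $tr^{-2}\gamma(r)^{\theta}r^{-d}|\zeta_{q}/r|^{\theta}$ appearing in \eqref{F-K upper}--\eqref{F-K lower2}.

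\textbf{Upper bound (i).} By Lemma~\ref{r-number}(iii) I may throw away configurations with $|\myrare|\ge r^{\chi}$ at a cost negligible compared to $\E[\mass]$. On the complementary event $\myrare$ is a union of at most $r^{\chi}$ unit cubes, so it sits inside a lattice animal $R_{r}$ of volume $<r^{\chi}$, and the spectral control gives $\lambda^{r}_{\xi}(\Lambda_{t/r})\ge\lambda^{r}_{\xi}(R_{r})-r^{-\rho}$. Treating the density set $\myden$ as a hard obstacle, a standard exit-time and eigenfunction-expansion argument yields
\begin{equation*}
 E_{x_{0}/r}\Biggl[\exp\Biggl\{-\int_{0}^{t/r^{2}}V^{r}_{\xi}(\widetilde{B}_{u})\,du\Biggr\}\Biggr]\le\mathrm{poly}(t)\exp\Bigl\{-(1-\e)\lambda^{r}_{\xi}(R_{r})\,t/r^{2}\Bigr\}
\end{equation*}
uniformly in $\xi$. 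Taking $\E$ and decomposing over the pair $(R_{r},\zeta)\in\S_{r}$ with $\zeta=\xi|_{(r[R_{r}:l])\cap\Z^{d}}$, the bound $\P(\zeta)\le\exp\bigl(-\sum_{q}|\zeta_{q}|^{\theta}\bigr)$ combined with the counting estimate \eqref{s-number} absorbs the number of summands, and the scaling identity above converts the cost term into the advertised form. This yields \eqref{F-K upper}.

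\textbf{Lower bound (ii) and (iii).} Fix a near-minimizer $(R_{r}^{\ast},\zeta^{\ast})\in\S_{r}$ of the right-hand side of \eqref{F-K upper}, and let $\varphi$ be the positive, $L^{2}$-normalized Dirichlet ground state of $-\tfrac{1}{2}\Delta+V^{r}_{\zeta^{\ast}}$ on $R_{r}^{\ast}$. I lower-bound the Feynman-Kac functional by restricting to trajectories that start in a small ball where $\varphi$ is bounded below and that remain inside $R_{r}^{\ast}$ up to time $t/r^{2}$; the eigenfunction expansion then produces the factor $\exp\{-\lambda^{r}_{\zeta^{\ast}}(R_{r}^{\ast})\,t/r^{2}\}$ up to polynomial corrections. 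Imposing the event $\{\xi_{q}=\zeta_{q}^{\ast}\text{ for every }q\in(r[R_{r}^{\ast}:l])\cap\Z^{d}\}$ contributes the probability $\exp\bigl\{-\sum_{q}|\zeta_{q}^{\ast}|^{\theta}\bigr\}$. The main obstacle is that the \emph{unconstrained} displacements $\xi_{q}$ with $q$ outside the buffer $r[R_{r}^{\ast}:l]$ still generate a potential tail inside $R_{r}^{\ast}$ that must not spoil the ground-state estimate; this is controlled by the forthcoming Lemma~\ref{outer}. For $\alpha>d+2$ the tail $|x|^{-\alpha}$ is summable enough that any fixed buffer $l$ works, giving (ii); in the critical case $\alpha=d+2$ the corresponding error is only logarithmic in $l$, forcing $l$ to be taken large, which is exactly why (iii) requires $l\ge l_{\e}$. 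Multiplying the three contributions and applying the scaling identity once more yields the matching lower bound.
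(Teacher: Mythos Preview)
Your overall strategy matches the paper's, and the upper bound sketch is essentially correct (one small slip: $\myrare$ need not be connected, so you do not enclose it in a lattice animal but rather pass to the connected component $R_r\subset\myrare$ on which $\lambda^{r}_{\xi}$ is attained; this is exactly how $\S_r$ is set up).

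The lower bound, however, has a genuine gap. You write ``restricting to trajectories that start in a small ball where $\varphi$ is bounded below,'' but the starting point $x_0$ is \emph{fixed}, so you cannot choose where to start. The paper handles this by a three-step device that your sketch omits. First, one proves an auxiliary lemma (Lemma~\ref{lem-mass} in the paper): there exists $p^{*}\in (rR_r^{*})\cap\Z^d$ such that simultaneously $\sup_{\Lambda_{2/r}(p^{*}/r)}V^{r}_{\zeta^{*}}\le c_0 r^{d+\chi+2}$ and $\int_{\Lambda_{1/r}(p^{*}/r)}\phi^{*}\ge (2\|\phi^{*}\|_\infty)^{-1}r^{-d-\chi}$. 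The proof of this lemma is not automatic: one must show that the ground state cannot concentrate on the set $\mathcal C$ of sub-boxes that are close to an obstacle, by bounding $\int_{\mathcal C}(\phi^{*})^2$ via a local quadratic-form estimate. Second, one uses the \emph{translation invariance} of the law of $\xi$ to shift the prescribed configuration by $p^{*}$, so that the good cube sits at the origin near $x_0$. Third, one splits the time interval as $[0,1]\cup[1,t]$: on $[0,1]$ the Brownian motion is confined to $\Lambda_2$ and the potential bound from the lemma controls the integrand (yielding a factor $\exp(-c_0 r^{d+\chi})$, harmless since $d+\chi<d+\mu\theta$); on $[1,t]$ one applies the eigenfunction expansion, using the mass bound from the lemma together with the estimate $\|\phi^{*}\|_\infty\le C$ (which itself requires the a~priori bound on $\lambda^{r}_{\zeta^{*}}(R_r^{*})$ coming from part~(i) and Theorem~\ref{F-K}). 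Without this chain your ``polynomial corrections'' are not under control.

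A minor correction: in the critical case $\alpha=d+2$ the tail contribution from Lemma~\ref{outer} is $t\cdot c(rl)^{-(\alpha-d)}=c\,tr^{-2}l^{-2}$, so the error in $l$ is polynomial ($l^{-2}$), not logarithmic; one needs $l\ge l_\e\sim\e^{-1/2}$.
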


\begin{proof}
We first prove the upper bound in (i).
By a standard Brownian estimate and scaling, we have
\begin{equation}
 \begin{split}
 & \E[\mass ] \\
 \le & \E \otimes E_{x_0}\left[\exp \set{- \int_0^tV_{\xi}(B_s)ds }:
 \sup_{0\le s\le t}|B_s|_{\infty} < \frac{t}2\right] + e^{-ct}\\ 
 \le & \E \otimes E_{x_0/r}\left[\exp \set{- \int_0^{tr^{-2}}V_{\xi}^r(B_s)ds }:
 \sup_{0\le s\le tr^{-2}} |B_s|_{\infty} < \frac{t}{2r}\right] + e^{-ct}.
 \label{upper-spec}
 \end{split}
\end{equation}
For any $\e \in (0,1)$, there exists a finite constant $c_{\e}$ depending only on $d$ and $\e$ such that
the first term of the right hand side is less than
$$
 c_{\e}\E \left[\exp \set{- (1-\e) \lambda_{\xi}^r(\Lambda _{t/r})tr^{-2} }\right]
$$
by (3.1.9) of \cite{Szn98}.
By the spectral control \eqref{spec}, Lemma \ref{r-number}, and \eqref{s-number}, this quantity is less than
\begin{equation*}
 \begin{split}
   &\, o(\E[\mass ]^{-1}) \sup_{(R_r,\, \zeta) \in \S_r} 
  \P(\xi_q = \zeta_q \textrm{ for all } q \in (r[R_r: l])\cap \Z ^d) \\
  &\times \exp \set{- (1-\e) (\lambda_{\zeta}^r(R_r)\wedge M-r^{-\rho})tr^{-2}}. 
 \end{split}
\end{equation*}
Thus, we have 
\begin{equation}
\begin{split}
 t^{-1}r^2\log \E[\mass ]
 \le &- (1-2\e ) \inf_{(R_r,\, \zeta) \in \S_r}\Biggl\{\lambda_{\zeta}^r(R_r)\wedge M -r^{-\rho} \\
 &+t^{-1}r^2\sum_{q \in (r[R_r: \,l])\cap \Z ^d} \left(|\zeta_q|^{\theta} + \log Z (d, \theta)\right)\Biggr\} 
\end{split} \label{F-K Upper'}
\end{equation}
for sufficiently large $t$.
We can drop $M$ and $r^{-\rho}$ from the right hand side since Theorem \ref{F-K} tells us 
that the left hand side is bounded from below. 
Moreover, we can also neglect $\log Z (d, \theta)$ since
\begin{equation}
\# ((r[R_r: l])\cap \Z ^d)\le cr^{d+\chi}=o(tr^{-2}). \label{q-sum}
\end{equation}
After removing the above three terms, \eqref{F-K Upper'} gives us the upper bound. 

We next proceed to the lower bound. 
We pick a pair $(R_r^*, \zeta^*)$ which attains the infimum in the right hand side of \eqref{F-K lower}.  
Then we have the following estimate for the $L^2$-normalized nonnegative eigenfunction $\phi^*$ 
corresponding to $\lambda_{\zeta^* }^r(R^*_r)$. 
\begin{lem}\label{lem-mass}
There exist $p^*\in (rR^*_r)\cap \Z ^d$ and $c_0>0$ such that 
$$\sup_{x \in \Lambda_{2/r}(p^*/r)}V_{\zeta^*}^r(x) \le c_0 r^{d+\chi+2}$$
and 
\begin{equation}
 \int_{\Lambda_{1/r}(p^*/r)}\phi^*(x) \, dx \ge \frac{1}{2\| \phi^* \|_{\infty}} r^{-d-\chi}. \label{mass}
\end{equation}
\end{lem}

\begin{proof}
We fix $1<r_0 <\infty$ so that 
$$\frac{C_0}{2|x|^{\alpha}} \le u(x) \le \frac{2C_0}{|x|^{\alpha}}$$ 
for all $|x| > r_0$ and
take $k \in \N$ satisfying $2^{-k-3}\le r_0/r < 2^{-k-2}$. We divide $R_r^*$ into subboxes of sidelength $2^{-k}$  as 
\begin{equation*}
 R_r^* = \bigcup_{\i \in \mathcal{I}^*} C_{\i} \quad \textrm{for some }\mathcal{I}^* \subset \mathcal{I}_k. 
\end{equation*}
Let $\mathcal{C}$ be the union of all boxes 
$C_{\i}$ in $R_r^*$ whose enlarged boxes $q_{\i}+2^{-k}[-1,2]^d$ intersect with 
$\{r^{-1}(q+\zeta^*_q): q \in (r[R_r^*: \,l])\cap \Z^d\}$.
Then it is easy to see that if $C_{\i} \subset \mathcal{C}$, 
there exist $a \in C_{\i}$ and $c_1 > 0$ for which $V_{\zeta ^*}^r \ge c_1r^2 1_{B(a, 1/r)}$. 
Thus, by using Lemma~3.5 in~\cite{FU09a}, which states 
\begin{equation}
 \inf \{ \lambda _1((-\Delta +1_{B(b,1)})_R^N) : b\in \Lambda _R\} \ge cR^{-d},
\end{equation}
and the scaling with the factor $r$, we have 
\begin{equation*}
 \inf_{\phi \in C^{\infty}(C_{\i})}\set{\frac{1}{\|\phi\|^2_2} \int_{C_{\i}} \Big( \frac{1}{2}|\nabla \phi (x)|^2 
 + V_{\zeta^*}^r(x) \phi(x)^2\Big) dx } 
 \ge c_2 r^2
\end{equation*}
for all $C_{\i} \subset \mathcal{C}$ and consequently 
\begin{equation*}
 c_2 r^2 \int_{\mathcal{C}} \phi^*(x)^2 dx
 \le \int_{\mathcal{C}} \Big( \frac{1}{2}|\nabla \phi^*|^2(x) + V_{\zeta^*}^r(x) \phi^*(x)^2\Big) dx. 
\end{equation*}
Since the right hand side is bounded from above by $\lambda_{\zeta^* }^r(R^*_r)$, 
it follows that 
$$\int_{\mathcal{C}} \phi^*(x)^2 dx \le c_3 r^{-2}.$$ 
This implies 
$$\int_{R_r^* \setminus \mathcal{C}} \phi^*(x)^2 dx \ge 1/2$$ 
for large $r$ and 
hence we can find a $\Lambda_{1/r}(p^*/r)$ in $R_r^*\setminus \mathcal{C}$ such that
\begin{equation*}
 \| \phi^* \|_{\infty} \int_{\Lambda_{1/r}(p^*/r)}\phi^*(x) \, dx \ge
 \int_{\Lambda_{1/r}(p^*/r)}\phi^*(x)^2 \, dx \ge \frac{1}{2} r^{-d-\chi}. 
\end{equation*}

Finally, we show the bound $\sup_{x \in \Lambda_{2/r}(p^*/r)}V_{\zeta^*}^r(x) \le c_0 r^{d+\chi+2}$. 
Note first that we have $\sup_{x \in \Lambda_{2/r}(p^*/r)}r^2 u(rx-q-\zeta^*_q) \le c_4 r^2$ 
for each $q$ since $R_r^*\setminus \mathcal{C}$ keeps the distance larger than $(r_0+1)/r$ from 
$\{r^{-1}(q+\zeta^*_q): q \in (r[R_r^*: \,l])\cap \Z^d\}$. 
Multiplying the total number of points 
$\# \{r^{-1}(q+\zeta^*_q: q \in (r[R_r^*: \,l])\cap \Z^d\} \le (2l+1)^d r^{d+\chi}$, 
we obtain the result. 
\end{proof}

We bound $\E[\mass ]$ from below by 
\begin{equation}
 \begin{split}
  & \P\left(\xi_q=\zeta^*_{p^*+q} \textrm{ for }q \in (r[R^*_r: \,l])\cap \Z ^d-p^*\right)\\
  & \times \P \left(\sup_{x \in (rR^*_r-p^*)\cup \Lambda _2}\sum_{q \in \Z ^d\setminus \{ (r[R^*_r: \,l])\cap \Z ^d-p^*\}} 
  u(x-q-\xi_q)  < \frac{c_1}{(rl)^{\alpha -d}}\right)\\
  & \times E_{x_0}\bigg[ \exp\set{-\int_0^t \sum_{q \in (r[R^*_r: \,l])\cap \Z ^d-p^*}u(B_s-q-\zeta _{p^*+q}^*)ds}: \\
  & \qquad B_s\in \Lambda _2\textrm{ for }0\le s\le 1, B_1\in \Lambda _1, B_s\in rR^*_r-p^*\textrm{ for }1\le s\le t \bigg]\\
  & \times \exp \left( -\frac{c_1t}{(rl)^{\alpha -d}}\right).
  \label{crude}
 \end{split}
\end{equation}
The first factor is greater than or equal to
$$\exp \left( -\sum_{q \in (r[R_r: \,l])\cap \Z ^d} |\zeta_q|^{\theta}-cr^{d+\chi}\right)$$
by the same argument using \eqref{q-sum} as for the upper bound.
The last factor is greater than $\exp (-\e tr^{-2})$ for sufficiently large $r$ if $\alpha > d+2$, and for
sufficiently large $r$ and $l$ if $\alpha = d+2$.
To bound the second factor we use the following: 
\begin{lem}\label{outer}
Let $\{ R_r : r\ge 1\}$ be a family of lattice animals satisfying $R_r \subset \Lambda _{t/r}$ and $|R_r| < r^{\chi}$.
Let $k, l>0$.
Then there exist $c_1, c_2, c_3>0$ independent of $R_r$ such that 
 \begin{equation}
  \P \left( \sup_{x \in [rR_r:k]}\sum_{q \in \Z ^d\setminus (r[R_r: \,l])} u(x-q-\xi_q) < c_1(rl)^{-\alpha+d}  \right) \ge c_2 \label{outer1}
 \end{equation}
 for any $r\ge c_3$.
\end{lem}

\begin{proof}

We consider the event
\begin{equation}
  \set{d(q+\xi_q, [rR_r:k]) \ge \frac{1}{2}d(q, [rR_r:k]) \textrm{ for all }q \in \Z ^d\setminus (r[R_r: l])}.\label{outer-event}
\end{equation}
On this event, we have 
\begin{equation*}
 \begin{split}
   & \sum_{q \in \Z ^d\setminus (r[R_r:\, l])} |x-q-\xi_q|^{-\alpha} \le \sum_{q \in \Z ^d\setminus (r[R_r:\, l])}\Bigl(\frac{2}{d(q, [rR_r:k])} \Bigr)^{\alpha}\\
  \le & c_4\sum_{q\in \Z ^d:\,d(q,rR_r)\ge rl}d(q, [rR_r:k])^{-\alpha} \le c_5(rl)^{-\alpha+d}
 \end{split}
\end{equation*}
for any $x\in [rR_r:k]$ and large $r$. 
By this estimate and the assumption $u(x) = C_0 |x|^{-\alpha}(1+o(1))$, we see that the event in \eqref{outer-event} implies the event in \eqref{outer1}. 
Since the inequality in \eqref{outer-event} is satisfied if
$$|\xi _q|\le d(q,[rR_r:k])/2\textrm{ for all }q \in \Z ^d\setminus (r[R_r: l]),$$ 
the probability of the event \eqref{outer-event} is greater than or equal to
\begin{equation}
 \prod_{q \in \Z ^d\setminus (r[R_r: l])}\left( 1-\frac{1}{Z (d, \theta)} 
  \sum_{y\in \Z ^d : |y| \ge d(q,\,[rR_r:k])/2} \exp (-|y|^{\theta}) \right).\label{outer-prob}
\end{equation}
It is easy to see that
$$\frac{1}{Z (d, \theta)} 
  \sum_{y\in \Z ^d : |y| \ge d(q,\,[rR_r:k])/2} \exp (-|y|^{\theta})
\le \exp (-c_6d(q,[rR_r:k])^{\theta})$$
and
$$\# \{q \in \Z^d : n \le d(q, [rR_r:k]) < n+1 \} \le c_7r^{\chi+d}n^{d-1}.$$
By using also an elementary inequality $(1-x)^p \ge 1-px$ for any $p \ge 1$ and $0<x<1$, the quantity in \eqref{outer-prob} is greater than or equal to
$$  \prod_{rl-k\le n \in \N}\left( 1- \exp (-c_6n^{\theta}) \right)^{c_7r^{\chi +d}n^{d-1}}
  \ge  \prod_{rl-k\le n \in \N}\left( 1- c_8r^{\chi +d}\exp (-c_9n^{\theta}) \right) .$$
Since the right hand side is a convergent infinite product, we conclude \eqref{outer1}. 
\end{proof}

It remains to bound the third factor in \eqref{crude}. 
We use the bound 
$$\sup_{x \in \Lambda_{2/r}(p^*/r)}V_{\zeta^*}^r(x) \le c_0 r^{d+\chi+2}$$ 
in Lemma \ref{lem-mass} for $0 \le s \le 1$ and the positivity of 
$$
\inf_{x,y\in \Lambda _1}\exp (\Delta ^D_2/2)(x,y), 
$$
where $\exp (t\Delta ^D_2/2)(x,y)$, $(t,x,y)\in (0,\infty )\times \Lambda _2\times \Lambda _2)$ is 
the integral kernel of the heat semigroup generated by the Dirichlet Laplacian on $\Lambda _2$ multiplied by $-1/2$.
Then, we can show that the third factor is greater than 
\begin{equation}
 r^d\exp (-c_0 r^{d+\chi})\int_{\Lambda _{1/r}}dy\int_{R^*_r-p^*/r}dz\exp (-(t-1)r^{-2}H^*)(y,z)
\label{hk-factor}
\end{equation}
for large $r$ by using a scaling, where $\exp (-tH^*)(x,y)$, $(t,x,y)\in (0,\infty )\times (R^*_r-p^*/r)\times (R^*_r-p^*/r))$ 
is the integral kernel of the heat semigroup generated by the Schr\"{o}dinger operator
$$H^*=-\Delta /2+\sum_{q\in (r[R^*_r:l])\cap \Z ^d-p^*}r^2u(rx-q-\zeta ^*_{p^*+q})$$
in $R^*_r-p^*/r$ with the Dirichlet boundary condition.
By \eqref{mass}, the integral in \eqref{hk-factor} is greater than or equal to
\begin{equation*}
 \begin{split}
  &\int_{\Lambda _{1/r}}dy\int_{R^*_r-p^*/r}dz\exp (-(t-1)r^{-2}H^*)(y,z)\frac{\phi ^*(z+p^*/r)}{\| \phi ^* \| _{\infty}}\\
  \ge & \exp (-(t-1)r^{-2}\lambda_{\zeta^*}^r(R^*_r))/(2\| \phi ^* \|^2 _{\infty}r^{d+\chi}). 
 \end{split}
\end{equation*}
Finally $\| \phi ^* \| _{\infty}$ is bounded since
$$\phi^*(y) = \exp (\lambda_{\zeta^*}^r(R^*_r))\int \exp (-H^0)(y,z)\phi^*(z) \,dz ,$$
$\| \exp (-H^0)(y,\cdot )\| _2 \le 1$, and $\lambda_{\zeta^*}^r(R^*_r)$ is bounded by Theorem \ref{F-K} 
and the upper bound in (i), where $\exp (-tH^0)(x,y)$, $(t,x,y)\in (0,\infty )\times R^*_r\times R^*_r$, 
is the integral kernel of the heat semigroup generated by the Schr\"{o}dinger operator $H^0=-\Delta /2+V^r_{\zeta ^*}$
in $R^*_r$ with the Dirichlet boundary condition.
By all these the lower bounds (ii) and (iii) are proven.
\end{proof}

%%% 1D result %%%
\subsection{Proof of a modified statement for the one-dimensional case}
We first fix a constant $M>0$ such that 
\begin{equation*}
 \begin{split}
  \P \left( \{q+\xi_q : q \in \Z\} \cap (0, Mt^{1/(3+\theta)}) =\emptyset \right) 
  & \le \exp \left\{ -c M^{1+\theta} t^{(1+\theta)/(3+\theta)} \right\}\\
  & = o(\E[\mass ]), 
 \end{split}
\end{equation*} 
which is possible in view of Theorem \ref{F-K}. 
We define the set $\S_r$ of relevant configurations by 
\begin{equation*}
 \begin{split}
  \S_r = \bigl\{ &((m,n), \zeta=(\zeta_q)_{q \in (m-l r, n+l r) \cap \Z }) \\
  & : m, n \in \Z, -t \le m < n \le t, n-m \le Mr, |\zeta_q| \le t^{1/\theta}, \\
  & \{q+\zeta_q : q \in (m-lr, n+lr) \cap \Z \} \cap (m,n) = \emptyset \bigr\} 
 \end{split}
\end{equation*} 
in this case. 
Now we can state the result. 
\begin{thm}\label{leading-1D}
 Let $d=1$ and assume the setting of Theorem~\ref{leading term simplified}. 
 Then, for any $\e>0$, there exist $t_{\e}>0$ and $l_{\e}>0$ such that 
\begin{equation*}
 \begin{split}
    & -(1+\e) \inf_{((m,n), \zeta) \in \S_r} 
   \biggl\{ \lambda_{\zeta}^r((m/r, n/r)) + \sum_{q \in (m-l r, n+l r) \cap \Z} r^{-1}\Bigl| 
   \frac{\zeta_q}{r} \Bigr|^{\theta} \biggr\} \\
   & \le t^{-(1+\theta)/(3+\theta)} \log \E[\mass ] \\
   & \le -(1-\e) \inf_{((m,n), \zeta) \in \S_r} 
   \biggl\{ \lambda_{\zeta}^r((m/r, n/r)) + \sum_{q \in (m-l r, n+l r) \cap \Z} r^{-1}\Bigl| 
   \frac{\zeta_q}{r} \Bigr|^{\theta} \biggr\} , 
 \end{split}
\end{equation*}
 for all $t > t_{\e}$ and $l > l_{\e}$. 
\end{thm}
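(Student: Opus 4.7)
The plan is to follow the two-step strategy of Theorem~\ref{leading term} with simplifications afforded by the one-dimensional geometry, in which lattice animals reduce to intervals. Throughout we set $r = t^{1/(3+\theta)}$ so that $t^{-(1+\theta)/(3+\theta)} = t^{-1}r^2$, and note that $\alpha = 3 = d+2$ places us in the critical regime analogous to part (iii) of Theorem~\ref{leading term}.

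\textbf{Upper bound.} Apply the Brownian cutoff and scaling of \eqref{upper-spec} together with (3.1.9) of \cite{Szn98} to reduce to $\E[\exp(-(1-\e)\lambda^r_\xi(\Lambda_{t/r}) tr^{-2})]$. In one dimension the spectral reduction is transparent: by monotonicity, $\lambda^r_\xi(\Lambda_{t/r})$ is bounded below by $\lambda^r_\xi((m/r,n/r))$ for any maximal interval $(m,n)\subset\Lambda_t$ avoiding the perturbed lattice set $\{q+\xi_q\}$. A straightforward large-deviation estimate based on \eqref{disc-dist} and the choice of $M$ shows that gaps exceeding $Mr$, as well as displacements $|\xi_q|>t^{1/\theta}$ for relevant $q$, contribute $o(\E[\mass])$ to the first moment. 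Hence the expectation reduces to a supremum over $\S_r$. A direct count gives $\#\S_r \le t^2(2t^{1/\theta}+1)^{(M+2l)r} = \exp(O(r\log t)) = o(\E[\mass]^{-1})$ since $r\log t = o(t^{(1+\theta)/(3+\theta)})$. Independence of $(\xi_q)_q$ and the scaling as in the derivation of \eqref{F-K Upper'} then completes the argument.

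\textbf{Lower bound.} Pick $((m^*,n^*),\zeta^*)\in\S_r$ attaining the infimum, and decompose $\E[\mass]$ into four factors mirroring \eqref{crude}. The displacement factor is at least $\exp(-\sum_q|\zeta^*_q|^\theta - O(r))$, and the $O(r)$ error is negligible against $tr^{-2}$. For the outer factor, the one-dimensional analogue of Lemma~\ref{outer} (with $\alpha-d=2$) gives a uniform potential bound $c_1(rl)^{-2}$ on $(m^*/r, n^*/r)$ with positive probability, whose exponential cost $\exp(-c_1 tr^{-2}(rl)^{-2})$ exceeds $\exp(-\e tr^{-2})$ as soon as $l\ge l_\e$; this is precisely where the critical exponent $\alpha=d+2$ forces $l$ to be taken large, in parallel with Theorem~\ref{leading term}(iii). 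The Feynman-Kac factor is obtained by using short-time heat-kernel positivity to route the Brownian motion from $x_0$ into a favorable sub-interval of length $1/r$ in time $1$, then applying the eigenfunction expansion as in \eqref{hk-factor}.

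\textbf{Main obstacle.} The delicate step is the one-dimensional version of Lemma~\ref{lem-mass}: a lower bound on $\int\phi^*\,dx$ over a sub-interval of length $1/r$ on which $V^r_{\zeta^*}$ is polynomially bounded. The dyadic subdivision argument of Lemma~\ref{lem-mass} adapts cleanly to 1D, and is in fact easier because $(m^*/r, n^*/r)$ contains no perturbed lattice points by definition of $\S_r$, so the single-site potentials have only tails inside the interval, and the eigenvalue lower bound on ``bad'' sub-intervals reduces to an elementary 1D Schr\"odinger estimate. Once this lemma is established, the rest of the lower bound follows the template of Theorem~\ref{leading term}(iii) essentially verbatim.
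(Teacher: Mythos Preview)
Your overall strategy matches the paper's: upper bound via a spectral reduction to intervals followed by counting, lower bound via the template of Theorem~\ref{leading term}(iii). However, there is a genuine gap in your upper bound.

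You write that ``by monotonicity, $\lambda^r_\xi(\Lambda_{t/r})$ is bounded below by $\lambda^r_\xi((m/r,n/r))$ for any maximal interval $(m,n)\subset\Lambda_t$ avoiding $\{q+\xi_q\}$.'' This is the wrong direction: domain monotonicity for Dirichlet Schr\"odinger operators says that shrinking the domain \emph{raises} the bottom eigenvalue, so $\lambda^r_\xi((m/r,n/r)) \ge \lambda^r_\xi(\Lambda_{t/r})$, which is useless for upper-bounding $\exp\{-(1-\e)\lambda^r_\xi(\Lambda_{t/r})tr^{-2}\}$. The potential $V^r_\xi$ is soft (Kato class), so the perturbed lattice points do not automatically impose Dirichlet conditions, and there is no ``transparent'' decoupling of $\Lambda_{t/r}$ into the components $I_k$.

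The paper closes this gap with the spectral control~\eqref{spec}, used in its simplest form: in one dimension a single point already has positive capacity, so every $2^{-n_1}$-box containing a point of $\{r^{-1}(q+\xi_q)\}$ satisfies the quantitative Wiener criterion and is declared a density box. The spectral control then yields
\[
\lambda^r_\xi(\Lambda_{t/r})\wedge M \;\ge\; \lambda^r_\xi\bigl(\Lambda_{t/r}\setminus\{r^{-1}(q+\xi_q):q\in\Z\}\bigr)\wedge M - r^{-\rho},
\]
and the domain on the right is a disjoint union of the open intervals $r^{-1}I_k$, whose bottom eigenvalue equals $\min_k \lambda^r_\xi(r^{-1}I_k)$. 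This is the step you must insert in place of the ``monotonicity'' claim; once it is in, the rest of your upper bound (discarding gaps longer than $Mr$, the counting $\#\S_r=\exp(O(r\log t))=o(\E[\mass]^{-1})$) agrees with the paper.

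Your lower bound discussion, including the need for $l\ge l_\e$ since $\alpha-d=2$ and the simplification of the mass lemma due to $(m^*,n^*)$ being free of perturbed lattice points, is consistent with the paper, which simply defers to the proof of Theorem~\ref{leading term}.
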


\begin{proof}
We only prove the upper bound. After having it, the lower bound follows exactly in the same way 
as for Theorem~\ref{leading term}. 

We use a simple version of the method of enlargement of obstacles where $\gamma = 1$ and any $2^{-n_1}$-box 
containing a point of $\{r^{-1} (q+\xi_q) : q \in \Z\}$ is a density box. 
Such a box indeed satisfies the quantitative Wiener criterion (2.12) in page 152 of \cite{Szn98} 
since even a point has positive capacity when $d=1$ (cf.\ page 153 of \cite{Szn98}). 
Then, the spectral control \eqref{spec} implies that we can impose the Dirichlet boundary condition 
on each point in $\{r^{-1} (q+\xi_q) \}_{q \in \Z}$. 

Combining this observation with a standard Brownian estimate and (3.1.9) in \cite{Szn98}, we find 
\begin{equation*}
 \begin{split}
  \E[\mass ] &\le \E \left[ c\left(1+\left(\lambda_{\xi}^1 ( (-t,t) ) t \right)^{1/2} \right) 
  \exp\left\{-\lambda_{\xi}^1 ( (-t,t) ) t  \right\} \right] + e^{-ct} \\
  & \le c_{\e} \E \left[ \sup_{k} \exp\left\{ -(1-\e) \lambda_{\xi}^r \left( r^{-1} I_k \right) tr^{-2}  \right\} \right] + e^{-ct},\\
 \end{split}
\end{equation*}
where $\e$ is an arbitrary positive constant and 
 $\{ I_k \} _k$ are the random open intervals such that $\sum_k I_k=(-t,t)\setminus \{ q+\xi _q : q \in \Z\}$. 
By considering all possibilities of $I_k$, we can bound the $\E$-expectation in the right hand side by 
\begin{equation*}
 \begin{split}
 \sum_{m,n\in \Z:-t \le m < n \le t}
 \E \Big[ & \exp\left\{ -(1-\e) \lambda_{\xi}^r \left( (m/r, n/r) \right) tr^{-2}  \right\} \\ 
 & : \{q+\xi_q : q \in \Z\} \cap (m, n)=\emptyset \Big] .
 \end{split}
\end{equation*}
Note that we can discard $(m,n)$ whose interval $n-m > M r$ thanks to our choice of $M$. 
Hence, we can restrict our consideration on $\S_r$ and we can also show 
$\# \S_r = \exp\{ o(t^{(1+\theta)/(3+\theta)}) \}$ by an elementary counting argument. 
Now, we have 
\begin{equation*}
\begin{split}
 & \E[\mass ] \\
 \le & \sum_{((m,n), \zeta) \in \S_r} \exp\left\{ -(1-\e) \lambda_{\zeta}^r \left((m/r, n/r)\right) tr^{-2}  \right\} 
 \P \left( \xi_q=\zeta_q \textrm{ for all } q \right) \\
 & +o(\E[\mass ])\\
 \le & \exp\biggl\{-(1-2\e) t^{(1+\theta)/(3+\theta)} \\
 & \times \inf_{((m,n), \zeta) \in \S_r} 
 \biggl\{ \lambda_{\zeta}^r((m/r, n/r)) + \sum_{q \in (m-l r, n+l r) \cap \Z} r^{-1}\Bigl| 
 \frac{\zeta_q}{r} \Bigr|^{\theta} \biggr\}\biggr\}, 
\end{split}
\end{equation*}
which is the desired estimate. 
\end{proof}

%%% Rewriting the variational problem %%%
\subsection{Proof of Theorem \ref{leading term simplified}}
In this section, we complete the proof of Theorem \ref{leading term simplified} by simplifying the variational expression in Theorem~\ref{leading term}. 
We treat only the multidimensional case since the modification for the one-dimensional case is straightforward. 
Recall that $\Omega_{t}=(\Z^d)^{\Lambda_{t}\cap\Z^d}$ is the set of possible configurations 
of $\{\xi _q\}_{q \in \Lambda_{t}\cap \Z^d}$. 
We first show
\begin{equation}
\begin{split}
	& \inf_{(R_r,\, \zeta) \in \S_r}\Biggl\{\lambda_{\zeta}^r(R_r)
	+\gamma (r)^{\theta}\sum_{q \in (r[R_r: \,l])\cap \Z ^d} r^{-d}\Bigl|\frac{\zeta_q}{r}\Bigr|^{\theta}\Biggr\}\\
	& \quad \ge (1-\e) \inf_{\zeta \in \Omega_{t} }\Biggl\{\lambda_{\zeta}^r(\Lambda_{t/r})
	+\gamma (r)^{\theta}\sum_{q \in \Lambda_{t} \cap \Z ^d} r^{-d}\Bigl|\frac{\zeta_q}{r}\Bigr|^{\theta}\Biggr\}.
	\label{rewrite-1}
\end{split}
\end{equation}
for sufficiently large $t$ (and $l$) if $\alpha \in (d, d+2)$ (resp.~$\alpha=d+2$). 
Let $(R_r^*,\, \zeta^*)$ be a minimizer of the variational problem in the first line. 
We extend $\zeta^*$ to $\zeta^{**} \in \Omega_{t}$ by setting 
$\zeta^{**}_q=0$ for $q \in (\Lambda _t\setminus r[R_r^*: \,l])\cap \Z ^d$. 
Then, it is obvious that 
\begin{equation}
	\sum_{q \in (r[R_r^*: \,l])\cap \Z ^d} r^{-d}\Bigl|\frac{\zeta^*_q}{r}\Bigr|^{\theta}
	\ge \sum_{q \in \Lambda_{t} \cap \Z ^d} r^{-d}\Bigl|\frac{\zeta_q^{**}}{r}\Bigr|^{\theta}.
\end{equation}
Moreover, we can prove 
\begin{equation}
	\sup_{x \in rR^*_r} \sum_{q \in \Z ^d\setminus (r[R_r^*:\, l])} |x-q-\zeta^{**}_q|^{-\alpha} 
	\le c_1(rl)^{-\alpha+d}
\end{equation}
for this $\zeta^{**}$. Therefore, we have 
\begin{equation}
	\lambda_{\zeta^*}^r(R_r^*)+c_2r^{-\alpha+d+2}l^{-\alpha+d} \ge \lambda_{\zeta^{**}}^r(\Lambda_{t/r})
\end{equation}
and this yields~\eqref{rewrite-1}. 

We next show
\begin{equation}
\begin{split}
	& \inf_{(R_r,\, \zeta) \in \S_r}\Biggl\{\lambda_{\zeta}^r(R_r)
	+\gamma (r)^{\theta}\sum_{q \in (r[R_r: \,l])\cap \Z ^d} r^{-d}\Bigl|\frac{\zeta_q}{r}\Bigr|^{\theta}\Biggr\}\\
	& \quad \le (1+\e) \inf_{\zeta \in \Omega_{t} }\Biggl\{\lambda_{\zeta}^r(\Lambda_{t/r})
	+\gamma (r)^{\theta}\sum_{q \in \Lambda_{t} \cap \Z ^d} r^{-d}\Bigl|\frac{\zeta_q}{r}\Bigr|^{\theta}\Biggr\}
	\label{rewrite-2}
\end{split}
\end{equation}
for sufficiently large $t$.
It follows from Lemma~\ref{r-number} that if a sequence $\{ \zeta ^t\} _t$ of configurations satisfies $\zeta ^t\in \Omega_t$ and $|\smash[b]{\underline{{\mathcal{R}}}}_r(\zeta ^t)| \ge r^{\chi}$ for any $t$, then we have 
\begin{equation} 
	\gamma(r)^{\theta} \sum_{q \in \Lambda_t\cap \Z^d} 
	r^{-d}\Bigl|\frac{\zeta_q^t}{r}\Bigr|^{\theta} \longrightarrow \infty. \label{divergence}
\end{equation}
as $t \to \infty$. Thus if each $\zeta^t$ is a minimizer of the right-hand side 
of~\eqref{rewrite-2}, then we have  $|\smash[b]{\underline{{\mathcal{R}}}}_r(\zeta^t)| < r^{\chi}$ for large $t$. 
We may also assume that $q+\zeta^t_q \in [\T : t^{1/(\mu \theta )}]$ for all 
$q \in (r[\smash[b]{\underline{{\mathcal{R}}}}_r(\zeta^t): \,l])\cap \Z ^d$ since otherwise
~\eqref{divergence} holds. 
We here extend $\zeta ^t$ to $(r[\smash[b]{\underline{{\mathcal{R}}}}_r(\zeta^t): \,l])\cap \Z ^d$ by $\zeta ^t_q=0$ for $q\in (r[\smash[b]{\underline{{\mathcal{R}}}}_r(\zeta^t): \,l])\cap \Z ^d\setminus \Lambda _t$.
There exists a lattice animal $R_r^t$ in ${\underline{{\mathcal{R}}}}_r(\zeta^t)$ such that $\lambda _{\zeta ^t}^r({\underline{{\mathcal{R}}}}_r(\zeta^t))=\lambda _{\zeta ^t}^r(R_r^t)$.
Then it follows that $(\smash[b]R_r^t, 
(\zeta^t_q)_{q \in (r[\smash[b]R_r^t: \,l])\cap \Z ^d}) \in \S_r$ for sufficiently large $t$. 
Combining with Spectral control~\eqref{spec}, we obtain~\eqref{rewrite-2}. 

%%%%%%%%%%%%%%%%%%%%%%%%%%%%%%%%%%%%%%%%%%%%%%%%%%section8%%%%%%%%%%%%%%%%%%%%%%%%%%%%%%%%%%%%%%%%%%%%%%%%%%%%%%%%%%%%%%%%%%%%%%%%%%
\section{Asymptotics of higher moments}\label{int}
In \cite{Fuk09a}, a result on the asymptotics for higher moments of the survival probability 
is shown as an application of the precise form of the leading term. 
We shall extend the result to our cases in this section. 
Our objects are the $p$-th moments $\E[\mass ^p]$ for $p \ge 1$.
We consider their asymptotics in Subsection~\ref{lifshitz-int}.
In Subsection \ref{Intermittency}, we discuss a related quantitative estimate on intermittency 
for the parabolic Anderson problem.
 
%%%%%%%%%%%%%%%%%%%%%%%%%%%%%%%%%%%%%%%%%%%%%%%%%%section8.1%%%%%%%%%%%%%%%%%%%%%%%%%%%%%%%%%%%%%%%%%%%%%%%%%%%%%%%%%%%%%%%%%%%%%%%%%%
\subsection{Asymptotics for each case}

\begin{prop}\label{lifshitz-int}
Under the settings in Theorem~\ref{leading term simplified}, 
there exist $c_1, c_2\in (0,\infty )$ depending on $d, \theta$ and $u$ such that for any $p \ge 1$, 
$$
-c_1 p^{(d+\mu\theta)/(d+2+\mu\theta)} \le t^{-1}r^2 \log \E[\mass ^p] 
\le -c_2 p^{(d+\mu\theta)/(d+2+\mu\theta)}
$$
holds for sufficiently large $t$, uniformly in $x_0 \in \Lambda_1$, where we take $\mu=1$ in the case $d=1$.  
\end{prop}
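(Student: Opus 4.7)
My plan is to reduce the $p$-th moment to a rescaled version of the first-moment problem via the pointwise bound $\mass \le (\text{poly}) \cdot \exp(-t\lambda_\xi^1(\Lambda_t))$, which upon being raised to the $p$-th power effectively replaces $t$ by $pt$ in the exponent. The same enlargement-of-obstacles machinery used to prove Theorem~\ref{leading term simplified} then applies with the adjusted scale $r_p := (pt)^{1/(d+2+\mu\theta)}$ (together with the logarithmic correction dictated by~\eqref{scale} in the $d=2$, $\alpha > 4$ case, and with the convention $\mu=1$ for $d=1$). Setting $\beta := (d+\mu\theta)/(d+2+\mu\theta)$, the algebraic identity $pt \cdot r_p^{-2} = p^{\beta}\cdot tr^{-2}$ converts rate estimates in the time variable into rate estimates in the moment parameter $p$.

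For the upper bound, the argument leading to~\eqref{upper-spec} together with an eigenfunction expansion on $\Lambda_t$ yields, uniformly in $x_0 \in \Lambda_1$,
\begin{equation*}
\mass \le c\bigl(1 + (t\lambda_\xi^1(\Lambda_t))^a\bigr)\exp\bigl(-t\lambda_\xi^1(\Lambda_t)\bigr) + e^{-ct}
\end{equation*}
with some $a,c>0$ depending only on $d$. Raising to the $p$-th power, taking the $\E$-expectation, and absorbing the polynomial prefactor into $\exp(\e p t^{\beta})$ reduces the problem to bounding $\E[\exp(-(1-\e)pt\lambda_\xi^1(\Lambda_t))]$. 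Rerunning the enlargement-of-obstacles construction of Subsections~2.1 and~2.2 with the new scale $r_p$ in place of $r$ --- the choices of $\chi$, $\eta$, $\gamma$, the density set, Lemma~\ref{r-number}, and the spectral control all depend on the scale only through $r_p$ and therefore carry over uniformly in $p\ge 1$ --- yields a bound of the form $\exp(-(1-2\e)pt r_p^{-2} \mathcal{V})$, where $\mathcal{V}$ denotes the variational infimum of Theorem~\ref{leading term simplified} evaluated at scale $r_p$. By Theorem~\ref{F-K}, $\mathcal{V}$ is bounded below by a positive constant uniformly in $p$ and large $t$, and dividing by $tr^{-2}$ gives the upper half of the proposition.

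For the lower bound, let $\zeta^*$ be a near-minimizer of the variational problem at scale $r_p$, whose support lies in a lattice animal of physical size $\asymp r_p$. Mimicking the lower-bound proof of Theorem~\ref{leading term}(ii)--(iii), bound $\E[\mass^p]$ from below by the product of $\P(\xi = \zeta^*$ on the enlarged support$)$, the probability furnished by Lemma~\ref{outer} that the atoms outside the support remain far, and the $p$-th power of the restricted Brownian expectation. The last factor is controlled by Lemma~\ref{lem-mass} and the heat-kernel bound leading to~\eqref{hk-factor}, giving $\gtrsim \exp(-(1+\e)pt\lambda_{\zeta^*}^{r_p})$; this step is legitimate because once $\xi$ is fixed, the $p$ Feynman--Kac integrals are independent replicas of the same deterministic functional, so raising to the $p$-th power simply raises the single-Brownian lower bound to that power. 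The three factors combine to $\exp(-C\,pt\,r_p^{-2}) = \exp(-C p^{\beta}\,tr^{-2})$, giving the lower half.

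The principal technical obstacle is verifying uniformity in $p$ when rerunning the enlargement-of-obstacles method at the new scale $r_p$: the density-set construction, the spectral control, Lemma~\ref{r-number}, Lemma~\ref{outer}, and Lemma~\ref{lem-mass} must each be inspected at this modified scale, and the ground-state overlap estimate of Lemma~\ref{lem-mass} must be transported to control each of the $p$ independent Brownian factors. Since the relevant combinatorial parameters $\chi$, $\eta$, $\gamma$ depend only on $d$ and $\theta$, and since $r_p \ge r$ is large for large $t$ and any fixed $p \ge 1$, the required uniformity follows from a careful rereading but introduces no genuinely new ideas.
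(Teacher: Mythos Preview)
Your proposal is correct and follows essentially the same approach as the paper: rerun the enlargement-of-obstacles argument of Section~2 at the rescaled parameter $s=(pt)^{1/(d+2+\mu\theta)}$ in place of $r$, observe that the resulting variational infimum is bounded above and below by positive constants independent of $p$ (this is precisely the content of Theorems~\ref{F-K} and~\ref{leading term}), and then use the algebraic identity $pt\cdot s^{-2}=p^{(d+\mu\theta)/(d+2+\mu\theta)}\cdot tr^{-2}$ to read off the claimed power of $p$. The paper's proof is terser, simply pointing to \eqref{upper-spec} and \eqref{hk-factor} as the places where $s$ replaces $r$, but the substance is identical to what you wrote out.
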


\begin{proof}
We first assume $d\ge 3$ and $\alpha >d+2$.
The same argument as in Section \ref{MEO}, using the scaling with factor $s=(pt)^{1/(d+2+\mu\theta)}$ 
instead of $r=t^{1/(d+2+\mu\theta)}$ in \eqref{upper-spec} and \eqref{hk-factor}, yields 
\begin{equation}
\begin{split}
\log \E[\mass ^p]\sim 
-& (pt)^{(d+\mu\theta)/(d+2+\mu\theta)} \\
& \times \inf_{(R_s,\, \zeta) \in \S_s}\Biggl\{ \lambda_{\zeta}^s(R_s)
 +s^{(1-\mu )\theta}\sum_{q \in (s[R_s: \,l])\cap \Z ^d} s^{-d}\Bigl|\frac{\zeta_q}{s}\Bigr|^{\theta}\Biggr\} 
\end{split} \label{p-asy1}
\end{equation}
as $t\to \infty$ for any $l$. Since we know 
\begin{equation}
\begin{split}
0 < &\varliminf_{s \to \infty} \inf_{(R_s,\, \zeta) \in \S_s}\Biggl\{ \lambda_{\zeta}^s(R_s)
 +s^{(1-\mu )\theta}\sum_{q \in (s[R_s: \,l])\cap \Z ^d} s^{-d}\Bigl|\frac{\zeta_q}{s}\Bigr|^{\theta}\Biggr\}\\ 
 \le &\varlimsup_{s \to \infty} \inf_{(R_s,\, \zeta) \in \S_s}\Biggl\{ \lambda_{\zeta}^s(R_s)
 +s^{(1-\mu )\theta}\sum_{q \in (s[R_s: \,l])\cap \Z ^d} s^{-d}\Bigl|\frac{\zeta_q}{s}\Bigr|^{\theta}\Biggr\} 
 < \infty
\end{split} \label{si-lim}
\end{equation}
from Theorems \ref{F-K} and \ref{leading term}, the proof is completed. 
The other cases can be treated exactly in the same way. 
\end{proof}

\begin{rem}
If $-\lim_{t \to \infty} t^{-1}r^2 \log \E[\mass ]$ exists under the setting of the last proposition, 
denoting it by $L$, we have 
\begin{equation}
	t^{-1}r^2 \log \E[\mass ^p] \sim -L p^{(d+\mu\theta)/(d+2+\mu\theta)}.\label{moment}
\end{equation}
Indeed, when $d \ge 2$ and $\alpha > d+2$, the existence of the above limit implies 
\begin{equation*}
 \lim_{t \to \infty} \inf_{(R_r,\, \zeta) \in \S_r}\Biggl\{ \lambda_{\zeta}^r(R_r)
 +\gamma (r)^{\theta}\sum_{q \in (r[R_r: \,l])\cap \Z ^d} r^{-d}\Bigl|\frac{\zeta_q}{r}\Bigr|^{\theta}\Biggr\} =L
\end{equation*}
by Theorem \ref{leading term} and then \eqref{moment} is obvious from 
the proof of the last proposition. 
When $\alpha = d+2$, we know only that the superior limit and the inferior limit in \eqref{si-lim} tend to $L$ as $l\to \infty$.
This is still enough to show \eqref{moment}.
\end{rem}

The above remark actually applies for the case $d = 1$ and $\alpha > 3$: 
\begin{prop}\label{l1p-asy}
Under the conditions of Theorem~\ref{F-K}-$(i)$ with $\alpha > 3$, we have
 \begin{equation}
  \lim_{t\uparrow \infty} t^{-(1+\theta )/(3+\theta )}\log \E[\mass ^p]
  = -\frac{3+\theta}{1+\theta}\Big( \frac{p\pi ^2}8 \Big) ^{(1+\theta )/(3+\theta )} \label{l1p-asy-1} 
 \end{equation}
for any $p\ge 1$, uniformly in $x_0 \in \Lambda_1$.
\end{prop}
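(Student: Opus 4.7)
The plan is to adapt the proof of Proposition~\ref{lifshitz-int} to the one-dimensional $\alpha>3$ regime and then promote it to a sharp asymptotic by invoking the explicit first-moment limit from Theorem~\ref{F-K}-(i). This is the mechanism highlighted in the Remark following Proposition~\ref{lifshitz-int}: once the scaling argument identifies the exponential rate up to a variational constant, knowing that constant for one value of $p$ automatically determines it for all $p$.

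First I would rerun the enlargement-of-obstacles scheme of Section~\ref{MEO} with the scaling factor $s:=(pt)^{1/(3+\theta)}$ in place of $r=t^{1/(3+\theta)}$. Starting from the representation
\[
\E[\mass^p] = E_{x_0}^{\otimes p}\Big[\E\Big[\exp\Big(-\sum_{i=1}^p \int_0^t V_\xi(B_s^{(i)})\,ds\Big)\Big]\Big]
\]
for integer $p$ (and extending to real $p\ge 1$ by the spectral bound $\mass\asymp \exp(-\lambda_\xi t)$, which avoids the need for a copy representation), the Brownian time horizon for the spectral factor becomes $pts^{-2}=s^{1+\theta}=(pt)^{(1+\theta)/(3+\theta)}$, while the spectral, configuration-counting, and displacement-cost ingredients depend on $s$ alone. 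This yields
\[
\log \E[\mass^p] = -(pt)^{(1+\theta)/(3+\theta)}\,I(s)\,(1+o(1)),
\]
where $I(s)$ is the one-dimensional analogue of the variational infimum of Theorem~\ref{leading-1D} adapted to $\alpha>3$ (for any fixed, sufficiently large $l$).

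Next, specializing the identity above to $p=1$ and comparing against Theorem~\ref{F-K}-(i) pins down
\[
\lim_{s\to\infty} I(s)=L:=\frac{3+\theta}{1+\theta}\Big(\frac{\pi^2}{8}\Big)^{(1+\theta)/(3+\theta)}.
\]
Since $I(s)$ does not depend on $p$, substituting this back produces
\[
\log \E[\mass^p]\sim -L\,(pt)^{(1+\theta)/(3+\theta)} = -\frac{3+\theta}{1+\theta}\Big(\frac{p\pi^2}{8}\Big)^{(1+\theta)/(3+\theta)}t^{(1+\theta)/(3+\theta)},
\]
which is the claimed asymptotic. Uniformity in $x_0\in\Lambda_1$ is automatic: the Brownian confinement estimate $\sup_{s\le t}|B_s|_\infty<t/2$, the spectral control, and the lower-bound construction of a favorable trajectory are all insensitive to shifts of the starting point within a fixed compact set.

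The main obstacle lies in the first step, since the paper only formulates Theorem~\ref{leading-1D} for $\alpha=3$. What is required is the parallel scaling-and-variational statement in the $\alpha>3$ regime, which one obtains by repeating the one-dimensional method of enlargement of obstacles while effectively treating $u$ as a hard single-site obstacle at leading order (this is where the explicit constant $\pi^2/8$, the Dirichlet eigenvalue on a unit interval, enters). Once this variational representation is available with $I(s)$ independent of $p$, the conclusion reduces to the one-line substitution above.
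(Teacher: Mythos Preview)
Your proposal is correct and follows essentially the same route as the paper's proof: rescale by $s=(pt)^{1/(3+\theta)}$ to obtain $\log\E[\mass^p]\sim -(pt)^{(1+\theta)/(3+\theta)}I(s)$ with the one-dimensional variational infimum $I(s)$ of Theorem~\ref{leading-1D}, then identify $\lim_{s\to\infty}I(s)=\tfrac{3+\theta}{1+\theta}(\pi^2/8)^{(1+\theta)/(3+\theta)}$ via the $p=1$ case of Theorem~\ref{F-K}-(i). Your detour through the replica representation $E_{x_0}^{\otimes p}[\cdots]$ is unnecessary (the paper, like your parenthetical alternative, goes directly through the spectral bound $\mass^p\le c_\e\exp\{-(1-\e)p\lambda_\xi t\}$ and the corresponding Feynman--Kac lower bound), and your concern that Theorem~\ref{leading-1D} is stated only for $\alpha=3$ is legitimate but harmless: its proof uses only that a single point has positive capacity in $d=1$ and carries over to $\alpha>3$ verbatim, which the paper simply takes for granted.
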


\begin{proof}
As in the proof of the last proposition we have
\begin{equation}
  \begin{split}
\log \E[\mass ^p]\sim
-&(pt)^{(1+\theta)/(3+\theta)} \\
& \times \inf_{(R_s,\, \zeta) \in \S_s}\Biggl\{ \lambda_{\zeta}^s((m/s,n/s))
 +\sum_{q \in (m-ls,n+ls)\cap \Z} s^{-1}\Bigl|\frac{\zeta_q}{s}\Bigr|^{\theta}\Biggr\} 
   \end{split} \label{l1p-asy1}
\end{equation}
as $t\to \infty$ for any $l$ in the notations of Subsection 2.3, where $s=(pt)^{1/(3+\theta)}$. 
When $\alpha >3$, we know the limit
 \begin{equation*}
  \begin{split}
& \lim_{s \to \infty} \inf_{(R_s,\, \zeta) \in \S_s}\Biggl\{ \lambda_{\zeta}^s((m/s,n/s))
 +\sum_{q \in (m-ls,n+ls)\cap \Z} s^{-1}\Bigl|\frac{\zeta_q}{s}\Bigr|^{\theta}\Biggr\} \\
 & =\frac{3+\theta}{1+\theta}\Big( \frac{\pi ^2}8 \Big) ^{(1+\theta )/(3+\theta )}.
   \end{split} 
 \end{equation*}
\end{proof}

\begin{prop}\label{pp-asy}
Under the conditions of Theorem~\ref{F-K} with $\alpha <d+2$, we have
 \begin{equation}
  \lim_{t\uparrow \infty} t^{-(d+\theta )/(\alpha +\theta )}\log \E[\mass ^p]= -p^{(d+\theta )/(\alpha +\theta )}c(d,\alpha ,\theta ,C_0) \label{pp-asy-1} 
 \end{equation}
for any $p\ge 1$, uniformly in $x_0 \in \Lambda_1$.
\end{prop}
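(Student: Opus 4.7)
The strategy is a direct adaptation of Propositions~\ref{lifshitz-int} and~\ref{l1p-asy}: apply the same machinery underlying Theorem~\ref{F-K}, but with the scaling factor $r=t^{1/(\alpha +\theta )}$ replaced by $s=(pt)^{1/(\alpha +\theta )}$. The Feynman--Kac identity
\begin{equation*}
\E[\mass ^p]=\E \otimes E_{x_0}^{\otimes p}\Bigl[\exp \Bigl\{-\sum_{j=1}^{p}\int_0^{t}V_{\xi }(B^{(j)}_s)\,ds\Bigr\}\Bigr]
\end{equation*}
makes the reason transparent: the $p$ independent Brownian motions share the common random environment $V_{\xi }$, so the $\xi $-cost of creating a small principal eigenvalue is paid once but is enjoyed by all $p$ copies, yielding an extra factor of $p$ multiplying the time parameter in the decay exponent.

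For the upper bound I would apply, in each of the $p$ path variables, the Brownian confinement estimate used in \eqref{upper-spec} together with (3.1.9) of \cite{Szn98}, bounding $\E[\mass ^p]$ by
\begin{equation*}
c_{\e }^{p}\,\E \bigl[\exp \{-(1-\e )pt\,\lambda _{\xi }^{s}(\Lambda _{t/s})\}\bigr]+e^{-cpt}
\end{equation*}
for any $\e >0$. This is precisely the quantity that the method of enlargement of obstacles controls in \cite{FU09a} at time $pt$; running that analysis in the new scale $s$ yields
\begin{equation*}
\log \E[\mass ^p]\le -(1-2\e )(pt)^{(d+\theta )/(\alpha +\theta )}c(d,\alpha ,\theta ,C_0)
\end{equation*}
for sufficiently large $t$. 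For the matching lower bound I would pick a near-minimizer $\zeta ^{*}$ of the variational problem
\begin{equation*}
\inf _{\zeta }\Bigl\{pt\,\lambda _{\zeta }^{1}(\Lambda _{pt})+\sum _{q}|\zeta _q|^{\theta }\Bigr\},
\end{equation*}
translate it within $O(1)$ of $x_0$ using the $\Z ^{d}$-periodicity of the law of $V_{\xi }$, and condition on $\{\xi =\zeta ^{*}\}$ in the relevant local region. The single-path Feynman--Kac lower bound then proceeds exactly as in \eqref{crude}--\eqref{hk-factor}, and raising it to the $p$-th power (permitted because the paths are independent given $\xi $) produces a factor $\exp \{-(1+\e )pt\,\lambda _{\zeta ^{*}}^{1}(\Lambda _{pt})\}$, whereas the configuration cost $\exp \{-\sum _q|\zeta ^{*}_q|^{\theta }\}$ is incurred only once. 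Uniformity in $x_0\in \Lambda _1$ follows from the same translation argument used for the previous propositions.

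The only point that is not routine bookkeeping is the claim that the rescaled variational infimum converges, as $s\to \infty $, to the explicit constant $c(d,\alpha ,\theta ,C_0)$. This is, however, not an additional difficulty: it is precisely the Laplace-type evaluation carried out in \cite{FU09a} to identify the leading constant in Theorem~\ref{F-K}-(i)--(iii) in the regime $d<\alpha <d+2$, and it transfers verbatim once the substitution $t\leadsto pt$ is made.
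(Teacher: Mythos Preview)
Your argument is correct, but the paper takes a much shorter route that avoids the eigenvalue and coarse-graining machinery altogether. For the upper bound the paper simply applies Jensen's (H\"older's) inequality to the path expectation,
\[
E_{x_0}\Bigl[\exp\Bigl\{-\int_0^t V_{\xi}(B_s)\,ds\Bigr\}\Bigr]^{p}
\le
E_{x_0}\Bigl[\exp\Bigl\{-p\int_0^t V_{\xi}(B_s)\,ds\Bigr\}\Bigr],
\]
so that $\E[\mass^p]$ is dominated by the \emph{first} moment with single-site potential $pu$ in place of $u$; Theorem~\ref{F-K} then applies verbatim with $C_0$ replaced by $pC_0$, and the scaling $q\mapsto p^{1/(\alpha+\theta)}q$, $y\mapsto p^{1/(\alpha+\theta)}y$ in~\eqref{const} yields $\int dq\,\inf_y(pC_0|q+y|^{-\alpha}+|y|^{\theta})=p^{(d+\theta)/(\alpha+\theta)}c(d,\alpha,\theta,C_0)$. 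For the lower bound the paper likewise stays elementary: it confines the path to a fixed small box $\Lambda_R$ and restricts to the configuration event $\Xi_t$ from Proposition~2.2 of~\cite{FU09a}, obtaining $\E[\exp\{-pt\sup_{\Lambda_R}V_{\xi}\}:\Xi_t]\exp(-cptR^{-2})$, which is then handled exactly as in that proposition. Two remarks on your write-up: your invocation of ``the method of enlargement of obstacles'' is slightly off target, since in the regime $d<\alpha<d+2$ the analysis in~\cite{FU09a} is the direct argument of Proposition~2.2 there, not MEO; and your displayed upper bound is missing an $s^{-2}$ in the exponent. Neither affects the validity of your approach, which does reach the same conclusion---the paper's point is that in the heavy-tailed regime the $p$-th moment reduces to a first-moment problem with rescaled coupling, so the spectral machinery you invoke is unnecessary.
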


\begin{proof} We have only to show
$$\lim_{t\uparrow \infty} t^{-(d+\theta )/(\alpha +\theta )}\log \E[\mass ^p]= -\int_{\R ^d}dq \inf_{y\in \R ^d}\Big( \frac{pC_0}{|q+y|^{\alpha}}+|y|^{\theta}\Big) .$$
The upper estimate is easy since we have
 \begin{equation*}
  \begin{split}
\E[\mass ^p] \le & \E \biggl[ E_{x_0} \biggl[ \exp \biggl\{-\int_0^t V_{\xi}(B_s) ds \biggr\} \biggr] ^p \biggr] \\
\le & \E \otimes E_{x_0} \biggl[ \exp \biggl\{-p\int_0^t V_{\xi}(B_s) ds \biggr\} \biggr]
  \end{split} 
 \end{equation*}
by removing the Dirichlet condition and using the H\"older inequality. 
For the lower estimate, we take $R$, $R_1$ and $\beta$ as in the proof of Proposition~2.2 in~\cite{FU09a} 
and restrict the integral as 
 \begin{equation*}
  \begin{split}
\E[\mass ^p]\ge \E \biggl[ E_{x_0} \biggl[& \exp \biggl\{-\int_0^t V_{\xi}(B_s) ds \biggr\} \\
 & : B_s \in \Lambda _R\text{ for }0\le s \le t\biggr] ^p : \Xi _t \biggr]
  \end{split} 
 \end{equation*}
for $t^{\beta}\ge 2(R_1+R\sqrt{d})$, where $\Xi _t$ is the set of configurations defined by
$$\{ |\xi _q|\le |q|/2\text{ for }|q|\ge t^{\beta},\text{ and }|q+\xi _q|\ge R_1+R\sqrt{d}\text{ for }|q|<t^{\beta} \} .$$
The right hand side is bounded from below by
$$\E \biggl[ \exp \biggl\{-pt \sup_{y\in \Lambda _R}V_{\xi}(y) \biggr\} : \Xi _t \biggr]
\exp (-cptR^{-2}).$$
This is estimated by the same method as in our proof of Proposition~2.2 in~\cite{FU09a}. 
\end{proof}

\begin{prop}\label{pn-asy}
Under the conditions of Theorem \ref{F-K} with $u \le 0$, we have
$$\lim_{t\uparrow \infty} t^{-(1+d/\theta )}\log \E[\mass ^p]
 =c_-(d,\theta ,pu(0))$$
for any $p\ge 1$, uniformly in $x_0 \in \Lambda_1$.
\end{prop}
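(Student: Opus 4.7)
The plan is to mirror Proposition~\ref{pp-asy}: combine a Jensen-type upper bound with a Brownian-confinement lower bound, both of which effectively replace the single-site potential $u$ by $pu$. For the upper bound, Jensen's inequality applied to the convex function $x\mapsto x^p$ and the Wiener measure gives
\begin{equation*}
\mass^p = E_{x_0}\bigl[e^{-\int_0^t V_{\xi}(B_s)\,ds}\bigr]^p \le E_{x_0}\bigl[e^{-p\int_0^t V_{\xi}(B_s)\,ds}\bigr],
\end{equation*}
and the right-hand side is exactly the total mass for the parabolic Anderson problem with modified single-site potential $\tilde u=pu$. Since $\tilde u\le 0$, $\inf\tilde u=pu(0)>-\infty$, and the $\e$-continuity hypothesis at $0$ is preserved under positive scalar multiplication, Theorem~\ref{F-K}(iv) applies to $\tilde u$ and yields
\begin{equation*}
\limsup_{t\to\infty} t^{-(1+d/\theta)}\log\E[\mass^p]\le c_-(d,\theta,pu(0)).
\end{equation*}

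For the lower bound, I would confine the Brownian motion to $\Lambda_R$ for a large but fixed $R\ge 1$. A standard Dirichlet eigenvalue estimate gives
\begin{equation*}
\mass\ge e^{-t\sup_{y\in\Lambda_R} V_{\xi}(y)}\,P_{x_0}\bigl(B_s\in\Lambda_R,\;0\le s\le t\bigr)\ge \exp\bigl(-t\sup_{y\in\Lambda_R} V_{\xi}(y)-ctR^{-2}\bigr),
\end{equation*}
so, raising to the $p$-th power and averaging,
\begin{equation*}
\E[\mass^p]\ge \E\bigl[e^{-pt\sup_{y\in\Lambda_R} V_{\xi}(y)}\bigr]\,e^{-cptR^{-2}}.
\end{equation*}
The lower-bound portion of the proof of Theorem~\ref{F-K}(iv) in~\cite{FU09a} in fact produces the stronger estimate
\begin{equation*}
\log \E\bigl[\exp\bigl(-s\sup_{y\in\Lambda_R}V_{\xi}(y)\bigr)\bigr]\ge s^{1+d/\theta}c_-(d,\theta,u(0))(1-o(1))\qquad(s\to\infty),
\end{equation*}
valid for $R$ sufficiently large. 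Setting $s=pt$ and invoking the homogeneity $c_-(d,\theta,pK)=p^{1+d/\theta}c_-(d,\theta,K)$ that is manifest from~\eqref{const-n}, the leading factor is at least $\exp\{t^{1+d/\theta}c_-(d,\theta,pu(0))(1-o(1))\}$, while $cptR^{-2}=O(t)\ll t^{1+d/\theta}$ is absorbed in the error.

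The main obstacle is not any new probabilistic input but the bookkeeping needed to confirm that the lower-bound construction of~\cite{FU09a} really delivers the asymptotic of $\E[\exp(-s\sup_{\Lambda_R}V_{\xi})]$ (rather than only of $\E[\mass]$ itself), so that the substitution $s=pt$ is legitimate. This should be transparent from that proof: the Brownian confinement factor $e^{-ctR^{-2}}$ is the only step involving $t$ as opposed to the parameter $s$ conjugate to $\sup V_{\xi}$, so isolating the configuration-space half of the argument yields the desired $s$-dependent estimate, and the homogeneity of $c_-$ then produces the constant $c_-(d,\theta,pu(0))$ automatically. Uniformity in $x_0\in\Lambda_1$ is inherited from the corresponding uniformity in the $p=1$ case.
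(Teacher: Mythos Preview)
Your proposal is correct and matches the paper's own argument essentially verbatim: the paper's proof is the one-line remark that the upper bound follows as in Proposition~\ref{pp-asy} (i.e., Jensen/H\"older to reduce to the single-site potential $pu$, then Theorem~\ref{F-K}(iv)) and the lower bound follows as in the proof of~\eqref{F-K-4} in~\cite{FU09a} (confine the Brownian motion to $\Lambda_R$, then run the configuration-space estimate with parameter $s=pt$). Your identification of the ``main obstacle'' as checking that the lower-bound construction in~\cite{FU09a} really controls $\E[\exp(-s\sup_{\Lambda_R}V_\xi)]$ is accurate, and your resolution---that the Brownian confinement cost $e^{-ctR^{-2}}$ is the only $t$-specific (as opposed to $s$-specific) ingredient---is exactly the point.
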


\begin{proof} 
The upper and lower estimates are obtained by similar ways to the proof of Proposition \ref{pp-asy}
and that of \eqref{F-K-4} respectively.
\end{proof}

%%%%%%%%%%%%%%%%%%%%%%%%%%%%%%%%%%%%%%%%%%%%%%%%%%section8.2%%%%%%%%%%%%%%%%%%%%%%%%%%%%%%%%%%%%%%%%%%%%%%%%%%%%%%%%%%%%%%%%%%%%%%%%%%
\subsection{Intermittency}\label{Intermittency}
The initial value problem of the form \eqref{PAM} is called the ``parabolic Anderson problem'' 
in literature, see e.g.~a survey article by G\"artner and K\"onig~\cite{GK04}. 
For a wide class of random potentials, it is believed that the solution of parabolic Anderson problem 
consists of high peaks which are far from each other. 
A manifestation of this phenomenon formulated by G\"artner and Molchanov~\cite{GM90} 
is so-called ``intermittency'' defined by 
\begin{equation}
 \frac{\E [\mass ^{p_2}]^{1/p_2}} {\E [\mass ^{p_1}]^{1/p_1}}
 \stackrel{t \to \infty}{\longrightarrow} \infty \quad \textrm{for } p_1 < p_2.
 \label{intermittency}
\end{equation}
Although \eqref{intermittency} implies the concentration of $\mass$ in the $\xi$-space, 
there is a way to relate this to the spatial concentration of the solution through the ergodic theorem. 
See Sect.~1.3 of~\cite{GK04} for this point. 
G\"artner and Molchanov also proved in ~\cite{GM90} that the intermittency holds for a quite general 
class of potentials. In particular, if we consider a slightly different moment
\begin{equation} 
\E \left[\int_{\Lambda_1} \mass ^p dx_0 \right]\label{stationary-moment}
\end{equation}
in our model, then the intermittency follows by the same argument as for Theorem 3.2 of \cite{GM90}. 

Our main result Theorem~\ref{leading term simplified} gives a more detailed description of 
the concentration in the configuration space. 
Indeed, it says that the main contribution to $\E[\mass]$ comes only from minimizers of the right hand side 
of \eqref{F-K upper simplified}. 
Furthermore, we can derive the rates of the divergence in \eqref{intermittency} from the results 
in the previous subsection as follows: 
\begin{enumerate}
 \item{Under the settings in Theorem~\ref{leading term simplified}, we have
\begin{equation*}
   \frac{\E[\mass ^{p_2}]^{1/p_2}} {\E[\mass ^{p_1}]^{1/p_1}}
   \begin{cases}
   \ge \exp\set{tr^{-2}\left(c_2 p_1^{-2/(d+2+\mu\theta)}-c_1 p_2^{-2/(d+2+\mu\theta)}\right)} \\[8pt]
   \le \exp\set{tr^{-2}\left(c_1 p_1^{-2/(d+2+\mu\theta)}-c_2 p_2^{-2/(d+2+\mu\theta)}\right)}, 
   \end{cases}
\end{equation*}
for sufficiently large $t$, where $\infty >c_1\ge c_2>0$ are the constants in Proposition \ref{lifshitz-int}. }
 \item{Under the settings in Theorem~\ref{F-K} with $d=1$ and $\alpha>3$, 
it holds that
\begin{equation*}
 \begin{split}
  & \frac{\E[\mass ^{p_2}]^{1/p_2}} {\E[\mass ^{p_1}]^{1/p_1}} \\
  =& \exp\set{\frac{3+\theta}{1+\theta}\Big( \frac{\pi ^2t}8 \Big) ^{(1+\theta )/(3+\theta )} 
 \left(p_1^{-2/(3+\theta)}-p_2^{-2/(3+\theta)}+o(1) \right)}
 \end{split}
\end{equation*}
as $t$ goes to $\infty$.}
 \item{Under the settings in Theorem~\ref{F-K} with $\alpha <d+2$, 
 it holds that
 \begin{equation*}
  \begin{split}
 & \frac{\E[\mass ^{p_2}]^{1/p_2}} {\E[\mass ^{p_1}]^{1/p_1}}\\
  =& \exp\set{c(d,\alpha ,\theta ,C_0) t^{(d+\theta)/(\alpha+\theta)}
 \left(p_1^{(d-\alpha)/(\alpha+\theta)}-p_2^{{(d-\alpha)/(\alpha+\theta)}}+o(1) \right)} 
  \end{split}
\end{equation*}
as $t$ goes to $\infty$.}
 \item{Under the settings in Theorem \ref{F-K} with $u \le 0$, 
 it holds that
\begin{equation*}
  \frac{\E[\mass ^{p_2}]^{1/p_2}} {\E[\mass ^{p_1}]^{1/p_1}}
  = \exp\set{c_-(d,\theta ,u(0)) t^{1+d/\theta} \left( p_2^{d/\theta}-p_1^{d/\theta}+o(1) \right)}
\end{equation*}
as $t$ goes to $\infty$.}
\end{enumerate}
Note that in the first case, the left hand side goes to infinity only when $p_2/p_1$ is sufficiently large. 
On the other hand, the left hand sides go to infinity for any $p_2/p_1>1$ in other cases. 
This is slightly better than Theorem 3.2 of \cite{GM90} where $p_2 \ge 2$ is required. 
Note also that all these estimates hold uniformly in $x_0 \in \Lambda_1$ and therefore, 
the same estimates hold for \eqref{stationary-moment} as well. 

\newcommand{\noop}[1]{}

\end{document}